
\documentclass[12pt, twoside]{article}
\usepackage{amsmath,amsthm,amssymb}
\usepackage{times}
\usepackage{enumerate}

\pagestyle{myheadings}
\def\titlerunning#1{\gdef\titrun{#1}}
\makeatletter
\def\author#1{\gdef\autrun{\def\and{\unskip, }#1}\gdef\@author{#1}}
\def\address#1{{\def\and{\\\hspace*{18pt}}\renewcommand{\thefootnote}{}%
\footnote {#1}}%
\markboth{\autrun}{\titrun}}
\makeatother
\def\email#1{e-mail: #1}
\def\subjclass#1{{\renewcommand{\thefootnote}{}%
\footnote{\emph{Mathematics Subject Classification (2010):} #1}}}
\def\keywords#1{\par\medskip
\noindent\textbf{Keywords.} #1}

\newtheorem{theorem}{Theorem}[section]
\newtheorem{corollary}[theorem]{Corollary}
\newtheorem{definition}[theorem]{Definition}

\newtheorem{lemma}[theorem]{Lemma}

\newtheorem{proposition}[theorem]{Proposition}
\newtheorem{remark}[theorem]{Remark}

\numberwithin{equation}{section}

\frenchspacing

\textwidth=15cm
\textheight=23cm
\parindent=16pt
\oddsidemargin=-0.5cm
\evensidemargin=-0.5cm
\topmargin=-0.5cm



\newcommand{\cala}{{\mathcal{A}}}
\newcommand{\calb}{{\mathcal{B}}}
\newcommand{\cald}{{\mathcal{D}}}

\newcommand{\calh}{\mathcal{H}}
\newcommand{\calo}{\mathcal{O}}
\newcommand{\calf}{\mathcal{F}}
\newcommand{\calt}{\mathcal{T}}
\newcommand{\calv}{{\mathcal{V}}}
\newcommand{\E}{\mathbb{E}}
\newcommand{\rr}{\mathbb{R}}

\newcommand{\vsp}{\vspace*{1,5mm}\\ }
\newcommand{\n}{\noindent }
\newcommand{\g}{{\gamma}}

\newcommand\lbb{\lambda}
\newcommand{\pas}{\mathbb{P}\mbox{-a.s.}}
\newcommand\pp{\partial}
\newcommand\dd{\displaystyle}
\def\a{\alpha}

\def\vp{\varepsilon}
\def\wt{\widetilde}
\def\barr{\begin{array}}
\def\earr{\end{array}}


\begin{document}


\baselineskip=17pt


\titlerunning{Stochastic partial differential equations driven by linear multiplicative~noise}

\title{An operatorial approach to  stochastic partial differential equations driven by linear multiplicative~noise}

\author{Viorel Barbu
\and
Michael R\"ockner}

\date{}

\maketitle

\address{V. Barbu: Octav Mayer Institute of Mathematics (Romanian Academy)   and Al.I. Cuza University of Ia\c si, Romania; \email{vbarbu41@gmail.com}
\and
M. R\"ockner: Fakult\"at f\"ur Mathematik, Universit\"at Bielefeld,  D-33501 Bielefeld, Germany; \email{roeckner@math.uni-bielefeld.de}}

\subjclass{Primary 60H15; Secondary 47H05; 47J05}


\begin{abstract} In this paper, we develop a new general approach to the existence and uniqueness theory of infinite dimensional stochastic equations of the form $$dX+A(t)X dt=X dW\mbox{ in }(0,T)\times H,$$ where $A(t)$ is a nonlinear monotone and demicontinuous ope\-ra\-tor   from $V$ to $V'$,  coercive and with polynomial growth. Here,   $V$ is a reflexive   Banach space con\-ti\-nuously and densely embedded in a Hilbert space $H$ of (generalized) functions on a domain $\calo\subset\rr^d$  and $V'$ is the dual of $V$ in the duality induced by $H$ as pivot space.  Furthermore, $W$ is a  Wiener process in $H$.   The  new approach is based on an operatorial reformulation  of the stochastic equation which is quite robust under perturbation of $A(t)$. This leads to new  existence and uniqueness results of a larger class of equations with linear multiplicative noise than the one treatable by the known approaches. In addition, we obtain regularity results for the solutions with respect to both the time and spatial variable which are sharper than the classical ones. New applications include stochastic partial differential equations, as e.g. stochastic transport  equations.

\keywords{Maximal monotone operator, stochastic integral, operatorial equations.}
\end{abstract}

\section{Introduction}

 This work is concerned with a new general functional approach to the exis\-tence and uniqueness theory for nonlinear stochastic infinite dimensional equations with monotone and demicontinuous time dependent nonlinearities from a reflexive Banach  space to its dual. Theorem \ref{t2.1} is the main general result obtained in this way, but the main point is the new method for its proof, namely rewriting the stochastic equation in operatorial form, which in turn is quite robust under perturbation, as formulated in Proposition \ref{p2}. The latter then  applies to a larger variety of examples, which do not \mbox{exactly} fit the general framework of Theorem \ref{t2.1}, but can be treated by a direct approach on the basis of Proposition \ref{p2}   (see, e.g., Section 6.3). In the li\-terature  on infinite dimensional stochastic differential equations, the type of equations in Theorem \ref{t2.1} was studied firstly by the classical Galerkin method, combined with monotonicity arguments by Pardoux \cite{1} and de\-ve\-loped later   in a general setting by Krylov and Rozovski \cite{2}. (A detailed presentation of these results is given in the monograph \cite{3}.) The approach we are pro\-po\-sing here is,  principally,  different and covers  more general types of nonlinear stochastic PDEs, as long as the noise is linear multiplicative. Moreover, the   results one obtains in this way are sharper, in regard to  new regularity properties of the solutions. In a few words, it consists in representing, via a rescaling transformation, the stochastic initial value problem as a random operator equation of monotone type in a new convenient space of stochastic processes and in invoking the standard perturbation theory for nonlinear maximal monotone operators to  get existence and uniqueness of solutions. In the special case, where the nonlinear operator is the subgradient of a convex function, the problem reduces to a convex optimization problem.

\section{Preliminaries}

Here we consider   the stochastic differential equation
\begin{equation}\label{e1.1}
\begin{array}{l}
dX(t)+A(t)X(t)dt= X(t)dW(t),\ t\in(0,T),\vsp
X(0)=x,\end{array}\end{equation}in a real separable Hilbert space $H$, whose elements are functions or distributions on a bounded and open set $\calo\subset\rr^d$ with smooth boundary $\partial \calo$. In particular, $H$ can be any  of the spaces $L^2(\calo)$, $H^1_0(\calo)$, $H^{-1}(\calo),$ $H^k(\calo)$, $k=1,...$, with the corresponding Hilbertian structure.  Here $H^1_0(\calo),$ $H^k(\calo)$ are the standard $L^2$-Sobolev spaces on $\calo$, and
$W$ is a Wiener   process of the form
\begin{equation}\label{e1.2}
W(t,\xi)=\sum^{\infty}_{j=1}\mu_je_j(\xi)\beta_j(t),\ \xi\in\calo,\  t\ge0,\end{equation}
where $\{\beta_j\}^\infty_{j=1}$ is an  independent system of real-valued  Brownian motions on a probability space $\{{\Omega},\calf,\mathbb{P}\}$ with   natural filtration $(\calf_t)_{t\ge0}$. Here,  $e_j\in C^2(\overline{\calo})\cap H$ is an orthonormal basis in $H$, $\mu_j\in\rr$, $j=1,2,...$.

The following hypotheses will be   in effect throughout this work.
\begin{itemize}
\item[(i)] {\it There is a reflexive Banach space $V$ with dual $V'$ such that $V\subset H$, con\-ti\-nuously and densely. Hence $V\subset H$ $(\equiv H')\subset V'$ continuously and densely. $($Note that this implies that also $V$ is separable.$)$  Moreover, $V$~and $V '$ are strictly convex $($which can always be achieved by con\-si\-de\-ring an appropriate equivalent norm on $V$ by Asplund's Theorem, see {\rm\cite[Theorem 1.2, p.2]{8})}.}

    \item[(ii)] {\it $A:[0,T]\times V\times{\Omega}\to V'$ is progressively measurable, i.e, for every $t\in[0,T]$, this operator restricted to $[0,t]\times V\times{\Omega}$ is $\calb([0,t])\otimes \calb(V)\otimes\calf_t$ measurable.}

    \item[(iii)] {\it There is $\delta\ge0$ such that, for each $t\in[0,T]$,  ${\omega}\in{\Omega}$, the operator $u\to \delta u+A(t,\omega)u$ is monotone and demicontinuous $($that is, strongly-weakly continuous$)$ from $V$ to $V'$. Moreover, there are $1<p<\infty$, $\alpha_i$  and $\g_i\in\rr$, $\alpha_1>0,$ $i=1,2,3,$ such that, $\pas$,
        \begin{eqnarray}
        \left<A(t,\omega)u,u\right>&\ge&\alpha_1|u|^p_V
        +\alpha_2|u|^2_H+\alpha_3,\ \forall  u\in V,\ t\in[0,T],\label{e1.3}\\[2mm]
        |A(t,\omega)u|_{V'}&\le&\g_1|u|^{p-1}_V
        +\g_2+\g_3|u|_{H},\ \forall  u\in V,\ t\in[0,T].\label{e1.4}
        \end{eqnarray}}
        \item[(iv)] {\it $e^{\pm W(t)}$ is, for each $t$, a multiplier in $V$ and a symmetric multiplier in $H$, such\break that there exists an $(\calf_t)$-adapted, $\rr_+$-valued process $Z(t)$, $t\in[0,T]$, with\break $\E\left[\sup\limits_{t\in[0,T]}
            |Z(t)|^r\right]<\infty$, $\forall r\in[1,\infty)$   and such that $\pas$
            \begin{equation}
            \label{e2.4a}
            \barr{ll}
            |e^{\pm W(t)}y|_V\le Z(t)|y|_V,& \forall t\in[0,T],\ \forall y\in
            V,\vsp
            |e^{\pm W(t)}y|_H\le Z(t)|y|_H,&\forall t\in[0,T],\ \forall y\in H.\earr\end{equation}

            Furthermore, we assume that, $\pas$,
           \begin{equation}\label{e25prim}
           \barr{l}
            \left< e^{\pm W(t)}x,y\right>=
            \left<x,e^{\pm W(t)}y\right>,\ \forall x,y\in H,\ t\in[0,T],\vsp
            t\longmapsto e^{\pm W(t)}\in H\mbox{ is continuous}. \earr\end{equation}}
\end{itemize}

  We also note  that,  by Fernique's theorem,
\begin{equation}\label{e2.5a}
\exp\left(\sup_{0\le t\le T}|W(t)|_\infty\right)\in L^q(\Omega),\ \forall q\in(0,\infty).\end{equation}
which will be used to estimate $\E|e^{W(t)}|^q_V$ and $\E|e^{W(t)}|^q_H$ to verify (iv) in many situations, where $V$ is a subspace of $L^q$, $1<q<\infty$, or a Sobolev space on $\calo\subset\rr^d$. Above and below, $|\cdot|_V$ and $|\cdot|_{V'}$ denote the norms of $V$ and $V'$, respectively,  and  $\left<\cdot,\cdot\right>$ the duality pairing between $V$ and $V'$; on $H\times H$, $\left<\cdot,\cdot\right>$ is just the scalar product of $H$. The norm of $H$ is denoted by $|\cdot|_H$, and $\calb(H),\calb(V)$ etc. are used to denote the class of Borelian sets on the corresponding spaces.

As regards the basis $\{e_j\}^\infty_{j=1}$ in \eqref{e1.2}, we assume that there exist $\wt\gamma_j\in[1,\infty)$  such that
\begin{equation}\label{e4ay}
\begin{array}{l}
|ye_j|_H\le\wt\gamma_j|e_j|_\infty|y|_H,\ \forall y\in H,\ j=1,2,...,\
\nu:=\displaystyle\sum^\infty_{j=1}\mu^2_j
\wt\gamma^2_j|e_j|^2_\infty<\infty,\end{array}\end{equation}
and, for
\begin{equation}\label{e1.9}
\mu:=\frac12\sum^\infty_{j=1}\mu^2_je^2_j,\end{equation}we  assume  that $\mu$ is a multiplier in    $V$ and a symmetric multiplier in $H$. All these assumptions on $W$, as well as Hypothesis (iv), typically hold in applications, as we will see in the examples in Section 6.

For $y\in H$, we define the operator
$$\sigma(y)h=\sum^\infty_{j=1}\mu_jy\left<h,e_j\right> e_j,\ h\in H.$$Then, by \eqref{e4ay}, we see that $\sigma$ is linear continuous form $H$ to the space $L_2(H)$ of all Hilbert-Schmidt operators on $H$ and $X\,dW=\sigma(X)d\wt W$ in the notation of e.g. \cite{17}, \cite{3} with $\wt W$ being the cylindrical Wiener process on $H$, informally written as $\wt W(t)=\sum\limits^\infty_{j=1}\beta_j(t)e_j.$

As usual, we set $p':=\frac p{p-1},\ 1<p<{\infty}.$

\begin{definition}\label{d1.1} \rm By a {\it  solution} to \eqref{e1.1} for $x\in H$, we mean an $(\calf_t)_{t\ge0}$-adapted process \mbox{$X:[0,T]\to H$} with continuous sample paths,  which sa\-tisfies
\begin{eqnarray}
\label{e1.3a}
&X\in L^{\infty}(0,T;L^2({\Omega};H))\\[1mm]
\label{e1.4a}
&X(t)+\displaystyle \int^t_0A(s)X(s)ds
=x+\displaystyle \int^t_0X(s)dW(s),\ t\in[0,T], \\[1mm]
\label{e1.5}
&A X\in L^{p'}((0,T)\times{\Omega};V'),\ X\in L^p((0,T)\times{\Omega};V).
\end{eqnarray}
\end{definition}

The stochastic integral arising in \eqref{e1.4a} is considered in It\^o's sense.

Under Hypotheses (i)--(iii), equation \eqref{e1.1} was studied   in \cite{1} and  \cite{2}.    The present approach is a general unifying one and leads to new regularity results for \eqref{e1.1} and, in particular, implies that the solution $X$ to \eqref{e1.1} is absolutely  continuous in $t$ up to multiplication with $e^{-W(t)}$.
It should be said, however, that it applies only to stochastic differential equations which admit a formulation in the variational setting $V\subset H\subset V'$  and with coercive nonlinearities.
  In~a~few words, the method is the following. By the rescaling transformation
\begin{equation}\label{e1.6}
X(t)=e^{W(t)}y(t),\ t\ge0,\end{equation}
one formally reduces equation \eqref{e1.1} to the random differential equation
\begin{equation}\label{e1.7}
\begin{array}{l}
\displaystyle \frac{dy}{dt}\,(t)+e^{-W(t)}A(t)
(e^{W(t)}y(t))+\mu y(t)=0,\mbox{ a.e. }t\in(0,T),\vsp
y(0)=x.\end{array}
\end{equation}
 The random Cauchy problem \eqref{e1.7} will be treated as an operatorial equation in a convenient Hilbert space of stochastic processes    to be described later on. A nice feature of the approach to be developed below is that, though \eqref{e1.7} is not an equation of monotone type, it can be rewritten as an operator equation of monotone type in an appropriate space of infinite dimensional stochastic processes on $[0,T]$.
It should be emphasized that the rescaling approach to the treatment of stochastic PDE with linearly multiplicative noise was previously applied in~\cite{4}, \cite{5} and one of the main advantages of this approach is that it leads to sharp  pointwise estimates and new pathwise regularity for solutions to \eqref{e1.1}. It should be mentioned, however, that such a result cannot be proved for equations with more general Gaussian processes $\sigma(X)W$, the linearity of $\sigma$ being essential for this approach.
\paragraph{Notations.} $H^k(\calo), k=1,2,$ and $H^1_0(\calo),$ $W^{1,p}_0(\calo)$, $W^{-1,p'}(\calo)$, $1\le p\le{\infty}$, are Sobolev spaces on $\calo$ (see, e.g., \cite{6}, \cite{7}). If $U$ is a Banach space we denote by $L^p(0,T;U)$, $1\le p\le\infty$, the space of   all $L^p$-integrable $U$-valued functions on $(0,T)$. Similarly, the space $L^p((0,T)\times\Omega;U)$ is defined. By $W^{1,p}([0,T];U)$ we denote the space of absolutely continuous functions $y:[0,T]\to U$ such that $\frac{dy}{dt}\in L^p(0,T;V)$. In the following we refer to \cite{8} for notations and stan\-dard results on the theory of maximal monotone operators in Banach spaces.
\section{The main result}
\setcounter{equation}{0}
\begin{theorem}\label{t2.1} Under Hypotheses {\rm(i)--(iv)}, for each $x\in H$, equation \eqref{e1.1} has a unique solution $X$ $($in the sense of Definition {\rm\ref{d1.1}}$)$.
Moreover, the function $t\to e^{-W(t)}X(t)$ is $V'$-absolutely continuous on $[0,T]$ and
\begin{equation}\label{e2.1}
\E\int^T_0\left|e^{W(t)}\,\frac d{dt}(e^{-W(t)}X(t))\right|^{p'}_{V'}dt<{\infty}.\end{equation}
\end{theorem}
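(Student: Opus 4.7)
The plan is to implement the rescaling strategy from the introduction: reduce \eqref{e1.1} to the random pathwise equation \eqref{e1.7} for $y(t) := e^{-W(t)}X(t)$, solve \eqref{e1.7} by maximal-monotone-operator theory, and then transform back. The formal derivation of \eqref{e1.7} from \eqref{e1.1} is It\^o's formula applied to $e^{-W(t)}X(t)$, with the zero-order term $\mu y$ arising from the quadratic variation of $W$ (cf.~\eqref{e1.9}). Since we want to \emph{construct} $X$, I would run the argument in reverse: first build $y$ solving \eqref{e1.7}, then set $X := e^{W}y$ and verify \eqref{e1.1} by applying It\^o's formula to the product $e^{W(t)}y(t)$.

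For the operator formulation, work in the Hilbert space of $(\calf_t)$-adapted processes contained in $L^p(\Omega\times(0,T);V)\cap L^2(\Omega;C([0,T];H))$ and rewrite \eqref{e1.7} as $\cald y + \mathcal{N}(y) = 0$, where
$$(\mathcal{N} y)(t,\omega) := e^{-W(t,\omega)} A(t,\omega)\bigl(e^{W(t,\omega)} y(t,\omega)\bigr) + \mu(\omega)\,y(t,\omega),$$
and $\cald y := \frac{dy}{dt}$ with the initial condition $y(0)=x$ encoded in its domain. The crucial point is that the multiplier and symmetry properties of $e^{\pm W(t)}$ in (iv) allow the $e^W$-factors inside $\mathcal{N}$ to be shifted onto the test function. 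Using a weighted duality pairing (morally, testing against $e^{2W(t)}(u-v)$ in place of $u-v$), a direct computation gives
$$\langle \mathcal{N}(u) - \mathcal{N}(v),\, u-v\rangle_* = \langle A(t)(e^W u) - A(t)(e^W v),\, e^W u - e^W v\rangle + \mu\,|e^W(u-v)|_H^2,$$
which is nonnegative by the (shifted) monotonicity of $A$, the $\delta$-defect being absorbed by the $\mu$-term after a global shift if necessary. Coercivity and polynomial growth of $\mathcal{N}$ follow from \eqref{e1.3}--\eqref{e1.4} combined with the moment bounds on $Z$ in (iv), while $\cald$ is linear and maximal monotone on the chosen process space.

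Existence and uniqueness of $y$ then follow from the standard perturbation theory of maximal monotone operators \cite{8}: $\cald + \mathcal{N}$ is maximal monotone and coercive, hence surjective. Setting $X := e^{W}y$ and applying It\^o's formula recovers \eqref{e1.1}, while \eqref{e1.3a}--\eqref{e1.5} for $X$ follow from their analogues for $y$ together with (iv); uniqueness for \eqref{e1.1} transfers back via the bijection $X\leftrightarrow e^{-W}X$. The regularity \eqref{e2.1} is then immediate from \eqref{e1.7}, since
$$e^{W(t)}\,\frac{d}{dt}\bigl(e^{-W(t)}X(t)\bigr) = e^{W(t)}\,\frac{dy}{dt}(t) = -A(t)X(t) - \mu\, X(t),$$
so \eqref{e1.4}, the integrability of $X$ in $V$, and the moment control of $Z$ and $\mu$ deliver \eqref{e2.1}.

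The main technical obstacle I expect is establishing maximal monotonicity (not merely monotonicity) of $\cald + \mathcal{N}$ on the infinite-dimensional adapted-process space: the range condition $\mathrm{Range}(I + \lambda(\cald+\mathcal{N})) = (L^p)^*$ for some $\lambda>0$ has to be verified while preserving $(\calf_t)$-adaptedness, which typically demands a Yosida (or elliptic) regularization of $\mathcal{N}$ combined with a fixed-point argument at the linear level for $\cald$. A secondary delicate step is the rigorous back-transformation: since a priori $\tfrac{dy}{dt}$ is only in $L^{p'}(0,T;V')$ pathwise, justifying It\^o's formula on $e^{W(t)}y(t)$ to conclude that $e^{W}y$ is a genuine It\^o process satisfying \eqref{e1.1} requires a Krylov--Rozovskii-type approximation argument in the variational framework.
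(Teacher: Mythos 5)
Your proposal follows essentially the same route as the paper: rescale via $y=e^{-W}X$, recast \eqref{e1.7} as an operator equation $\calb y+\cala y=0$ in $e^{W}$-weighted spaces of adapted processes (your ``weighted duality pairing'' is exactly the paper's choice of norms on $\calv,\calh,\calv'$), prove maximal monotonicity of both pieces (the time-derivative part via a Yosida/elliptic regularization, as you anticipate), apply the Rockafellar perturbation theorem plus coercivity, and transfer back with a rigorously justified It\^o formula (the paper's Lemma \ref{l8.1}). The only point to watch is that the bare derivative $\frac{d}{dt}$ is \emph{not} monotone in the weighted pairing because of the It\^o correction terms, so the shift by $(\mu+\nu)y$ (with $\nu$ from \eqref{e4ay}) must be attached to $\calb$ rather than left implicit, but this is precisely the ``global shift'' you allow for.
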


The meaning of the derivative $\frac{dy}{dt}$ will be made precise below.

As mentioned earlier, the proof strategy is to reduce \eqref{e1.1} via the trans\-for\-ma\-tion \eqref{e1.6} to the random differential equation \eqref{e1.7}, which will be treated afterwards as a deterministic evolution equation.

 A heuristic application of the It\^o formula in \eqref{e1.1} leads to (see Lemma \ref{l8.1} in the Appendix for its rigorous justification)
\begin{equation}\label{e2.1a}
dX=e^Wdy+e^WydW+\mu e^Wydt,\end{equation}
where $y$ is given by \eqref{e1.6}.  Substituting into \eqref{e1.1} yields \eqref{e1.7}, that is,
\begin{equation}\label{e2.2}
\begin{array}{l}
\displaystyle \frac{dy}{dt}+e^{-W}A(t)(e^Wy)+\mu y=0,\ t\in(0,T),\vsp
y(0)=x.\end{array}\end{equation}
\begin{definition}\label{d2.1} \rm A solution to \eqref{e2.2} is an $H$-valued $(\calf_t)_{t\ge0}$-adapted process $y=y(t)$, $t\in[0,T]$, with continuous sample paths, $V'$-absolutely continuous on $[0,T]$, $\pas$, and satisfying the following conditions
\begin{eqnarray}
&\displaystyle \sup_{t\in[0,T]}\E|e^{W(t)}y(t)|^2_H<{\infty},
\label{e2.3}
\\[2mm]
&\E\displaystyle \int^T_0\left|e^{W(t)}\,
 \frac{dy}{dt}\,(t)\right|^{p'}_{V'}dt<{\infty},
 \label{e2.5}\\[2mm]
&\begin{array}{l}
\displaystyle \frac{dy}{dt}\,(t)+e^{-W(t)}A(t)(e^{W(t)}y(t))+
\mu  y(t) =0, \mbox{ a.e. }t\in(0,T),\\[2mm]
 y(0)=x,\end{array}\label{e2.6}\\[2mm]
&\E\displaystyle \int^T_0|e^{W(t)}y(t)|^{p}_{V}dt<{\infty}.\label{e2.7}
\end{eqnarray}$($Hence $A(t)(e^Wy)\in L^{p'}({\Omega}\times(0,T);V'),$ by \eqref{e1.4}$)$.\end{definition}

In particular, it follows by \eqref{e2.4a} and \eqref{e2.5}, \eqref{e2.7} that $y:[0,T]\to V'$ is absolutely continuous $\pas$ and
\begin{equation}\label{e3.8a}
y\in L^p(0,T;V),\ \ \frac{dy}{dt}\in L^{p'}(0,T;V'),\ \pas\end{equation}
The exact meaning of the function $\zeta=\frac{dy}{dt}$ arising in \eqref{e2.5}, \eqref{e2.6}, \eqref{e3.8a} is
\begin{equation}\label{e3.8b}
y(t,\omega)=y(0,\omega)+\int^t_0\zeta(s,\omega)ds,\ \forall t\in[0,T],\ \omega\in\Omega,
\end{equation}where $e^W\zeta\in L^{p'}((0,T)\times\Omega;V')$.

We have
\begin{proposition}\label{p2.1} Under Hypotheses {\rm(i)--(iv)}, for each $x\in H$, equation \eqref{e2.2} has a unique solution $y$.\end{proposition}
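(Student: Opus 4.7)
The plan is to reformulate the random Cauchy problem \eqref{e2.2} as an abstract operator equation of monotone type on a Gelfand triple of $(\calf_t)$-adapted stochastic processes on $[0,T]$, and then to invoke classical existence theorems for monotone evolution equations (cf.\ \cite{8}). To this end, introduce the weighted spaces of adapted processes
$$
\calv := \Bigl\{y : \E\!\int_0^T\!|e^{W(t)}y(t)|_V^p\,dt < \infty\Bigr\}, \qquad
\calh := \Bigl\{y : \E\!\int_0^T\!|e^{W(t)}y(t)|_H^2\,dt < \infty\Bigr\},
$$
equipped with the indicated weighted norms. By Fernique's theorem \eqref{e2.5a} and \eqref{e2.4a}, multiplication by $e^{\pm W}$ is a bicontinuous isomorphism between these and the standard $L^p$ (resp.\ $L^2$) spaces of adapted processes on $(0,T)\times\Omega$, so $\calv$ is reflexive. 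Identifying $\calh$ with its dual via the weighted pairing
$$
(u,v)_{\calh} := \E\!\int_0^T\!\langle e^{W(t)}u(t),e^{W(t)}v(t)\rangle\,dt,
$$
one obtains the continuous dense embeddings $\calv\subset\calh\subset\calv'$, and \eqref{e2.2} takes the form $\tfrac{dy}{dt} + \cala y = 0$, $y(0)=x$ in $\calv$, where
$$
(\cala y)(t) := e^{-W(t)}A(t)(e^{W(t)}y(t)) + \mu\, y(t).
$$

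The crux is to verify that $\cala$, shifted by $\delta I$, is maximal monotone, bounded, demicontinuous, and coercive from $\calv$ to $\calv'$. Using the symmetry property \eqref{e25prim} together with the fact that $\mu$ is a symmetric $H$-multiplier, one obtains
$$
(\cala u - \cala v, u-v)_{\calh} = \E\!\int_0^T\!\langle A(t)(e^Wu)-A(t)(e^Wv),\,e^W(u-v)\rangle\,dt + \E\!\int_0^T\!\langle \mu\, e^W(u-v), e^W(u-v)\rangle\,dt,
$$
whence the $\delta$-monotonicity of $A(t)$ from (iii) and the nonnegativity of $\mu$ imply that $\cala + \delta I$ is monotone in this weighted pairing. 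Coercivity and the polynomial growth bound for $\cala$ follow analogously from \eqref{e1.3} and \eqref{e1.4} combined with \eqref{e2.4a}, while demicontinuity is inherited from that of $A(t)$ and the continuity of multiplication by $e^{\pm W}$. Since the realization of $y\mapsto\tfrac{dy}{dt}$ on $\{y\in\calv:\tfrac{dy}{dt}\in\calv',\ y(0)=x\}$ is known to be maximal monotone on the Gelfand triple $\calv\subset\calh\subset\calv'$, the Brezis sum/perturbation theorem for maximal monotone operators (cf.\ \cite[Ch.~2]{8}) then yields a unique $y\in\calv$ solving the reformulated equation, which is the desired solution.

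The main obstacle is precisely this monotonicity verification: the $\delta$-monotonicity of $A(t)$ in the original $V$--$V'$ duality only transfers to a useful estimate for $\cala$ after identifying the correct weighted pivot space $\calh$ and exploiting the symmetry \eqref{e25prim}; this is the essence of the operatorial reformulation announced in Section~1. Once existence and uniqueness in $\calv$ are secured, adaptedness is automatic because the whole argument takes place within the space of adapted processes, the integrability conditions \eqref{e2.3}, \eqref{e2.5}, \eqref{e2.7} follow from $y\in\calv$, the equation itself, and \eqref{e1.4} (together with a standard energy estimate for \eqref{e2.3}), and uniqueness in the sense of Definition~\ref{d2.1} is obtained by applying the monotonicity estimate above to the difference of two solutions and invoking Gr\"onwall's lemma.
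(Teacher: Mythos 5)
Your reformulation is the same one the paper uses (weighted adapted spaces $\calv\subset\calh\subset\calv'$, symmetry of $e^{\pm W}$ to move the operator into the weighted pairing, then monotone operator theory), but there is a genuine gap at the point you declare the time-derivative "known to be maximal monotone on the Gelfand triple." With the weighted pairing $(u,v)_{\calh}=\E\int_0^T\langle e^{W}u,e^{W}v\rangle\,dt$, the bare operator $y\mapsto\frac{dy}{dt}$ is \emph{not} monotone: by the It\^o formula \eqref{e419},
\begin{equation*}
\E\int_0^T\Bigl\langle e^{W}\tfrac{dy}{dt},e^{W}y\Bigr\rangle dt
=\tfrac12\,\E|e^{W(T)}y(T)|^2_H-\tfrac12|y(0)|^2_H
-\E\int_0^T\langle\mu e^{W}y,e^{W}y\rangle dt
-\tfrac12\,\E\int_0^T\sum_{j}|e^{W}ye_j|^2_H\mu_j^2\,dt,
\end{equation*}
so two negative terms appear. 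Your decomposition puts $\mu y$ into $\cala$, which cancels the $\mu$ term when you add the two pairings, but the It\^o correction $\frac12\sum_j\mu_j^2|e^Wye_j|^2_H$ survives with the wrong sign and is not dominated by the $\delta$-shift from (iii). This is exactly why the paper first reduces (via $y\to e^{-(\nu+\delta)t}y$) to an $A$ satisfying the strong monotonicity \eqref{e18a} with the constant $\nu$ of \eqref{e4ay}, and then splits the equation as $\calb y+\cala y=0$ with $\calb y=\frac{dy}{dt}+(\mu+\nu)y$ and $\cala y=e^{-W}A(e^Wy)-\nu y$: the extra $\nu$ in $\calb$ absorbs the It\^o correction (yielding \eqref{e3.10b}), and the $-\nu y$ in $\cala$ is compensated by \eqref{e18a}. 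Without this bookkeeping your "monotonicity verification" does not close.

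The second gap is maximality of the derivative part. Even after the correct shift, this is not a citation to a known result: the duality map of $\calv$ is the randomized, weighted operator $F$ of \eqref{e4.39az}, and proving $R(\calb+F)=\calv'$ occupies all of Lemma \ref{l8.2} in the paper (Yosida approximation $G_\lambda$ of the restriction $G_H$, a priori estimates via \eqref{e419}, weak limits, and the Minty-type argument of Lemma \ref{l93}). Relatedly, the Rockafellar sum theorem only gives maximal monotonicity of $\calb+\cala$; to get $0\in R(\calb+\cala)$ one still needs the elliptic regularization $\lambda F(y_\lambda)+\calb y_\lambda+\cala y_\lambda=0$ together with the coercivity \eqref{e1.3} and Gronwall, as in the paper's proof. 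Your sketch names the right ingredients (coercivity, growth, demicontinuity) but the two steps above are where the actual work, and the specific structure of the noise via \eqref{e4ay}, enter; as written they are asserted rather than proved.
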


We shall prove Proposition  \ref{p2.1} in Section 4 via an operatorial approach to be des\-cribed later on in Section 4. Now, we reduce the proof of Theorem \ref{t2.1} to Proposition~\ref{p2.1}.

\medskip\n{\it Proof of Theorem \ref{t2.1}.} By Lemma \ref{l8.1} in the Appendix, we know that equations \eqref{e1.1} and \eqref{e2.2} are equivalent via the rescaling transformation \eqref{e1.6} and so existence and uniqueness of a solution $X$ to \eqref{e1.1} in the sense of Definition \ref{d1.1} follows by Proposition \ref{p2.1}. As regards \eqref{e2.1}, this is a direct consequence of \eqref{e2.5}, \eqref{e3.8a}.~\rule{1,5mm}{1,5mm}

\begin{remark}\label{r2.1} {\rm We must emphasize that \eqref{e2.1} is a new and  somewhat   surprising regularity result for the solution  $X$ to the stochastic equation \eqref{e1.1}. It amounts to saying that up to multiplication  with $e^{-W}$, the process $X$ is $\pas$ absolutely continuous $V'$-valued on $[0,T]$.  We recall that the standard existence theory for equation \eqref{e1.1} under Hypotheses (i)-(iii) provides a solution $X\in L^p((0,T)\times\Omega;dt\times d\mathbb{P};V)\cap L^2(\Omega;C([0,T];H))$ only (see \cite{2}, \cite{1},~\cite{3}).}  \end{remark}

\section{An operatorial approach to equation \eqref{e2.2}}


\setcounter{equation}{0}

Without loss of generality, we may assume that $A(t)$ satisfies the strong monotonicity condition
\begin{equation}\label{e18a}
\left<A(t)u-A(t)v,u-v\right>\ge\nu|u-v|^2_H,\ \forall u,v\in V,\end{equation}
where $\nu$ is defined by \eqref{e4ay}. Indeed, it is easily seen that, by the substitution \mbox{$y\to e^{-(\nu+\delta)t}y$,} equation \eqref{e2.2} can be equivalently written as
\begin{equation*}\begin{array}{l}
\displaystyle\frac{dy}{dt}\,(t)+e^{-W(t)}\widetilde A(t) (e^{W(t)}y(t))+
\mu y(t)=0,\ t\in(0,T),\vsp
y(0)=x,\end{array}\end{equation*}
where
$$\widetilde A(t)y=e^{-(\delta+\nu)t}A(t)
(e^{(\delta+\nu)t} y )+(\delta+\nu) y.$$Then  $\wt A:V\to V'$ and $\wt A$ satisfies (ii), (iii) and \eqref{e18a}.

We note that the operator $y\to e^{-W(t)}A(t)(e^{W(t)}y)$ is not monotone in \mbox{$V\times V'$} and so the standard existence theory (see, e.g., \cite{8}, \cite{9}) is not applicable in this case. Therefore, we define new spaces   $\calh,$ $\calv$ and $\calv'$, as follows. $\calh$ is the Hilbert space of all $H$-valued $(\calf_t)_{t\ge0}-$adapted processes $y:[0,T]\to H$ such that
\begin{equation}\label{e20a}
|y|_\calh=
\left(\E\int^T_0
|e^{W(t)}y(t)|^2_Hdt\right)^{\frac12}<{\infty},
\end{equation}where $\E$ denotes the expectation in the above probability space. The space $\calh$ is endowed with the norm $|\cdot|_\calh$ coming from the scalar product \begin{equation}\label{e20aa}
\left<y,z\right>_\calh=
\E\int^T_0
\left<e^{W(t)}y(t),e^{W(t)}y(t)\right>dt.\end{equation} $\calv$ is the space of the $V$-valued $(\calf_t)_{t\ge0}$-adapted process $y:[0,T]\to V$ such that \begin{equation}\label{e20aaa}
|y|_\calv=\left(\E\int^T_0|e^{W(t)}
y(t)|^p_Vdt\right)^{\frac1p}<{\infty}.\end{equation} Clearly, $\calv$ is reflexive. $\calv'$ (the dual of $\calv$) is the space of all $V'$-valued $(\calf_t)_{t\ge0}$-adapted processes $y:[0,T]\to V'$ such that
\begin{equation}\label{e20aaaa}
|y|_{\calv'}=\left(\E\int^T_0|e^{W(t)}y(t)|^{p'}_{V'}dt
\right)^{\frac1{p'}}<{\infty},\end{equation}where $\frac1p+\frac1{p'}=1.$
If $2\le p<{\infty}$,   we have
\begin{equation}\label{e3.1}
\calv\subset\calh\subset\calv'\end{equation}continuously and densely   and

\begin{equation}\label{e3.2}
_{\calv'}\left<u,v\right>_{\calv}
=\E\int^T_0\left<e^{W(t)}u(t),
e^{W(t)}v(t)\right>dt, v\in\calv,\ u\in\calv',\end{equation} is just the duality pairing between $\calv$ and $\calv'$. We have also
\begin{equation}\label{e4.27}
_{\calv'}\left<u,v\right>_\calv=\left<u,v\right>_\calh,\ \forall  u \in\calh, \ v\in V.\end{equation}
In the case where $1<p<2$ we replace $\calv$ by $\calv\cap\calh$ and still have \eqref{e3.1}.

We also note that  we have the continuous embeddings
\begin{equation}\label{e4.8prim}
\barr{ll}
L^{p_2}((0,T)\times\Omega;V)\subset \calv\subset L^{p_1}((0,T)\times\Omega;V),&\forall 1\le p_1<p,\vsp
& \max(p,2)<p_2. \earr\end{equation}
Now, we fix $x\in H$ and define on $\calv$ the operators $\cala:\calv\to\calv'$ and $\calb:D(\calb)\subset\calv\to\calv'$ as follows:
\begin{eqnarray}
(\cala y)(t)\!&{=}&\!e^{-W(t)}A(t)(e^{W(t)} y(t))-\nu  y(t),\label{e3.3}   \mbox{ a.e. }t\in(0,T),\, y\in\calv, \\[2mm]
(\calb y)(t)\!&{=}&\!\frac{dy}{dt}\,(t)+(\mu+\nu) y(t),\ \mbox{ a.e. }t\in(0,T),\ y\in D(\calb),\nonumber\end{eqnarray}
\begin{eqnarray}
D(\calb)&{=}&\Big\{y\in\calv:y\in AC([0,T];V')\cap C([0,T];H),\ \pas,\label{e3.4}\\
  &&\qquad\qquad\qquad\qquad\qquad\qquad  \frac{dy}{dt}\in\calv',\ y(0)=x\Big\}.\nonumber
\end{eqnarray}
Here, $AC([0,T];V')$ is the space of all absolutely continuous $V'$-valued functions on $[0,T]$ and $\frac{dy}{dt}$ is defined as in \eqref{e3.8b}. The fact that indeed $\cala(\calv)\subset\calv'$ follows by \eqref{e1.4} since $p\ge p'$ if $p\ge2$, and $\calv$ is replaced by $\calv\cap\calh$ for $1< p<2$.

We also note that, if $p\ge2$ and $y\in L^p(0,T;V)$ and $\frac{dy}{dt}\in L^{p'}(0,T;V')$, that is, if $y\in W^{1,p'}([0,T];V')\cap L^p(0,T;V)$, then $y\in C([0,T];H)$  and $\frac{dy}{dt}$ is just the derivative of $y$ in the sense of $V'$-valued distributions on $(0,T)$ (see, e.g., \cite[Corollary 2.1, p. 33]{8}) and so the condition $y\in C([0,T];H)$ in the definition of $D(\calb)$ is redundant. That the same is true for $1<p<2$, follows by Lemma \ref{l92} in the Appendix.

We note also that, by virtue of \eqref{e4.8prim}, we have
$$\frac{dy}{dt}\in L^{p_1}((0,T)\times\Omega;V'),\ \forall y\in D(\calb),$$for any $1\le p_1<p'$ and so $y:[0,T]\to L^{p_1}(\Omega;V')$ is absolutely continuous, $\pas$

Then, the Cauchy problem \eqref{e2.2} can be written as the operatorial \mbox{equation}
\begin{equation}\label{e3.5}
\calb y+\cala y=0.\end{equation}

We have

\begin{lemma}\label{l3.1} The operators $\cala$ and $\calb$ are maximal monotone from $\calv$ to $\calv'$.\end{lemma}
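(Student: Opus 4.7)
My plan is to treat $\cala$ and $\calb$ separately: for $\cala$ the analysis is essentially algebraic, while the monotonicity of $\calb$ relies on an It\^o computation that exploits the fixed initial condition $y(0)=x$ encoded in $D(\calb)$. The maximal monotonicity of $\calb$ is what I expect to be the main obstacle.

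\emph{The operator $\cala$.} Using the symmetry of multiplication by $e^{\pm W(t)}$ in $H$ from \eqref{e25prim} and the definition of the $\calv'$–$\calv$ pairing in \eqref{e3.2}, one obtains
\[\left\langle \cala y_1 - \cala y_2,\, y_1 - y_2\right\rangle_{\calv',\calv} = \E\!\int_0^T\!\bigl[\langle A(t)(e^W y_1) - A(t)(e^W y_2),\, e^W(y_1-y_2)\rangle - \nu\,|e^W(y_1-y_2)|_H^2\bigr]dt,\]
which is nonnegative by the strong monotonicity \eqref{e18a}; the subtraction $-\nu y$ in \eqref{e3.3} is calibrated precisely to cancel the $\nu$ in \eqref{e18a}. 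Well-definedness $\cala(\calv)\subset\calv'$ follows from the polynomial growth \eqref{e1.4} together with the $L^r(\Omega)$-bounds of $Z$ in \eqref{e2.4a}. Since $\cala$ is everywhere defined on the reflexive space $\calv$, Browder–Minty reduces maximal monotonicity to hemicontinuity of $\cala$, which in turn follows from the pointwise demicontinuity of $A(t,\omega):V\to V'$ in (iii) plus dominated convergence based on \eqref{e1.4} and \eqref{e2.4a}.

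\emph{Monotonicity of $\calb$.} Let $y_1,y_2\in D(\calb)$ and put $w:=y_1-y_2$, $Y(t):=e^{W(t)}w(t)$; by the common initial condition in $D(\calb)$, $w(0)=0$ and hence $Y(0)=0$. Because $w$ is $V'$-absolutely continuous and $d(e^{W(t)})=e^W dW+e^W\mu\,dt$ by It\^o's formula, the cross bracket $[e^W,w]$ vanishes and
\[dY = e^W\!\left(\tfrac{dw}{dt}+\mu w\right)dt + Y\,dW.\]
Applying It\^o to $|Y|_H^2$ (rigor along the lines of Lemma \ref{l8.1}), identifying the quadratic-variation term with $2\langle \mu Y,Y\rangle_H dt$ via $d\langle W(\cdot,\xi)\rangle_t = 2\mu(\xi)\,dt$, and taking expectation to kill the martingale term, one gets
\[\E|Y(T)|_H^2 = 2\E\!\int_0^T\!\langle e^W\tfrac{dw}{dt},\,Y\rangle\,dt + 4\E\!\int_0^T\!\langle \mu Y,\,Y\rangle\,dt.\]
Substituting into the pairing, since $e^W \calb w = e^W \tfrac{dw}{dt} + (\mu+\nu) Y$, gives
\[\langle \calb y_1 - \calb y_2,\, y_1-y_2\rangle_{\calv',\calv} = \tfrac12\,\E|Y(T)|_H^2 + \E\!\int_0^T\!\langle (\nu-\mu)Y,\,Y\rangle\,dt \;\geq\; 0,\]
where the inequality uses the pointwise bound $\mu(\xi)=\tfrac12\sum \mu_j^2 e_j(\xi)^2 \leq \tfrac12\sum\mu_j^2|e_j|_\infty^2\leq\nu$, which holds by $\widetilde\gamma_j\geq 1$ in \eqref{e4ay} and the definition \eqref{e1.9}.

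\emph{Maximal monotonicity of $\calb$; the main obstacle.} Translating by any fixed element of $D(\calb)$ with initial value $x$ reduces the problem to the linear case $x=0$. Since $\calv,\calv'$ are strictly convex and reflexive by hypothesis (i), Minty's criterion then reduces maximality to verifying $R(\calb+J)=\calv'$, where $J$ is the duality map of $\calv$. Given $f\in\calv'$, the existence of $y$ with $\calb y + Jy=f$ and $y(0)=0$ in the \emph{un}weighted scale $L^p((0,T)\times\Omega;V)$, $L^{p'}((0,T)\times\Omega;V')$ is standard monotone-operator theory for first-order evolution equations in the Gelfand triple $V\subset H\subset V'$ (\cite{8}, \cite{9}), e.g.\ via Galerkin or Yosida regularization of $J$. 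The genuinely delicate point — and what I expect to be the main obstacle — is to upgrade this to the \emph{weighted} integrability required by \eqref{e20aaa}, \eqref{e20aaaa}, namely $\E\!\int_0^T|e^W y|_V^p\,dt<\infty$ and $\E\!\int_0^T|e^W\tfrac{dy}{dt}|_{V'}^{p'}\,dt<\infty$; this is precisely where the multiplier bounds \eqref{e2.4a} and Fernique's estimate \eqref{e2.5a} are exploited to pass freely between the unweighted and weighted integrability of $y$ and $\tfrac{dy}{dt}$.
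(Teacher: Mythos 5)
Your treatment of $\cala$ and of the monotonicity of $\calb$ follows the paper's route: Minty--Browder for the everywhere-defined, monotone, demicontinuous $\cala$ (with the $-\nu y$ term calibrated against \eqref{e18a}), and It\^o's formula for $|e^Wy|^2_H$ to get $\left<\calb y_1-\calb y_2,y_1-y_2\right>\ge\frac12\,\E|e^{W(T)}(y_1-y_2)(T)|^2_H$ as in \eqref{e3.10b}. One caveat there: your identification of the quadratic-variation term with $2\left<\mu Y,Y\right>_H dt$ and your pointwise comparison $\mu(\xi)\le\nu$ implicitly assume $H=L^2(\calo)$ with its standard inner product. In the paper $H$ may be $H^{-1}(\calo)$ or $H^k(\calo)$, where $|Ye_j|^2_H\ne\int Y^2e_j^2\,d\xi$; the argument that works in general keeps the bracket term as $\frac12\sum_j\mu^2_j|Ye_j|^2_H$, bounds it by $\frac\nu2|Y|^2_H$ using only \eqref{e4ay}, and lets the $\mu$-terms cancel exactly against the $\mu$ in $\calb$.

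The genuine gap is the maximality of $\calb$, which you correctly single out as the main obstacle but do not close. Two problems. First, the duality map of $\calv$ is itself weighted --- up to normalization it is $(Fy)(t)=e^{-W(t)}J(e^{W(t)}y(t))|e^{W(t)}y(t)|^{p-2}_V$ with $J$ the duality map of $V$, see \eqref{e4.39az} --- so the range condition $R(\calb+F)=\calv'$ is not a ``standard first-order evolution equation in the unweighted scale'': the weight sits inside the nonlinearity and cannot be stripped off. Second, the claim that one can ``pass freely between the unweighted and weighted integrability'' via \eqref{e2.4a} and Fernique is false: $e^{W}$ is unbounded over $\Omega$, so $\E\int_0^T|y|^p_V\,dt<\infty$ does not yield $\E\int_0^T|e^Wy|^p_V\,dt<\infty$; H\"older always costs an exponent, which is exactly why \eqref{e4.8prim} only gives $L^{p_2}\subset\calv\subset L^{p_1}$ with $p_1<p<p_2$. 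The paper devotes a separate lemma (Lemma \ref{l8.2}) to this point: it replaces $F$ by the Yosida approximation $G_\lambda$ of $G(z)=J(z)|z|^{p-2}_V$ restricted to $H$, solves the resulting Lipschitz random ODE, derives the a priori estimate \eqref{e8.6} directly in the weighted norms via the It\^o identity \eqref{e8.5}, and identifies the weak limit of $G_\lambda(e^Wy_\lambda)$ by the Minty-type argument of Lemma \ref{l93}, first for $f\in\calh$ and then for $f\in\calv'$ by approximation. Some such construction (or a pathwise solution followed by a genuine adaptedness-plus-integrability argument carried out in expectation) is required; as written, your final step is an assertion rather than a proof.
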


\begin{proof} $\cala:\calv\to\calv'$ is maximal monotone by virtue of the classical Minty-Browder result (see, e.g., \cite{8}, p. 43)  because, as easily seen, by virtue of (ii)--(iv) and \eqref{e18a} it   is monotone, that is,
$$_{\calv'}\left<\cala y-\cala z,y-z\right>_{\calv}\ge0,\ \forall  y,z\in \calv,$$and   demicontinuous on $\calv$ (that is strongly-weakly continuous).

Indeed, if $y_n\to y$ in $\calv$, then Hypothesis (iii) implies that (along a subsequence) $y_n\to y\,dt\otimes \mathbb{P}\mbox{-a.e.}$ Hence $dt\otimes\mathbb{P}\mbox{-a.e.}$
\begin{eqnarray}\label{e3.6}
\begin{array}{l}
(\cala y_n)(t)=e^{-W(t)} A(t)(e^{W(t)} y_n(t) ){-}\nu y_n(t) \vsp
\qquad\quad \longrightarrow
e^{-W(t)}A(t)(e^{W(t)} y(t) )-\nu  y(t) \
 \mbox{ weakly in }V'\end{array}\end{eqnarray}
because, by Hypothesis (iv), $e^{W(t)}$ is a multiplier in $V$. We also have by \eqref{e1.4}
$$|\cala y_n|_{\calv'}\le\g_1|y_n|^{p-1}_\calv
+\g_2+\g_3|y_n|_{\calh},\ \forall  n\in\mathbb{N},$$and so, by the Banach-Alaoglou theorem, we have that (along a subsequence)
\begin{equation}\label{e3.7}
\cala y_n\to\eta\mbox{ weakly in $\calv'$}.\end{equation}On the other hand, by \eqref{e3.6} we have that, for a countable dense subset of $\psi\in V,$
\begin{equation}\label{e3.8}
\hspace*{-5mm}\barr{r}
_{V'}\!\left<\cala y_n(t),\psi\right>_{V}\to{}_{V'}
\left<e^{-W(t)}A(t)(e^{W(t)} y(t) )-\nu  y(t) ,
\psi\right>_V,\vsp dt\otimes\mathbb{P}-\mbox{a.e. on $(0,T)\times{\Omega}.$}\earr \end{equation}
By \eqref{e3.7} and \eqref{e3.8}, we see that
$$\barr{r}
\eta=e^{-W(t)} A(t)(e^{W(t)}  y(t)) -\nu  y(t) =(\cala y)(t),\  dt\otimes\mathbb{P}-\mbox{a.e. in $(0,T)\times{\Omega}$,}\earr$$as claimed.

We prove, now, that $\calb$ is maximal monotone.

To prove the monotonicity, let $y_1,y_2\in D(\calb)$     and set $y:=y_1-y_2.$ Then
\begin{equation}\label{e3.9}
\begin{array}{r}
_{\calv'}\left<\calb(y),y\right>_\calv
=\E\displaystyle \int^T_0\left<e^{W(t)}\frac d{dt}\,y(t),
e^{W(t)}(y(t))\right>dt\vsp
\qquad\qquad+\E\displaystyle \int^T_0\left<(\mu+\nu)
y e^W,y e^W\right>dt.\end{array}\end{equation}
By   Lemma  \ref{l8.1}(jjj) in the Appendix, we have, for $y\in D(\calb)$,
\begin{equation}\label{e4.15a}
d(y e^W)=e^Wdy+e^W y dW+\mu e^W y dt,\end{equation}and, applying It\^o's formula  to $|ye^W|^2_H$ (which is justified   because, e.g., if $p\ge2,$ Theo\-rem 4.2.5 in \cite{3} and, if $p\in(1,2)$, Remark 4.2.8 (iii) in \cite{18prim}, respectively, can be applied due to   Lemma \ref{l8.1} (jjj) below), we obtain
\begin{equation}\label{e419}
\begin{array}{ll}
\displaystyle\frac12\,d|e^W y|^2_H
&=\left<e^W\,\dd\frac{dy}{dt}\,,e^W y \right>dt
+\left<e^W y ,e^W y\, dW\right>\vsp
&+\left<\mu e^W y ,e^W y \right>dt
+\displaystyle\frac12
\sum^\infty_{j=1}|e^W y e_j|^2_H\mu^2_jdt.
\end{array}
\end{equation}Then, by \eqref{e4ay}, we obtain that
\begin{equation}\label{e3.10a}
\begin{array}{lcl}
\E \displaystyle \int^T_0 \left< e^W\,\frac d{dt}  y ,e^W y  \right> dt\vsp
\qquad \ge
-\E \displaystyle \int^T_0 \left<(\mu+\nu) y e^W, y e^W\right>dt +\displaystyle \frac12\,\E|e^{W(T)} y(T) |^2_H-\dd\frac 12\,|y(0)|^2_H.\end{array}\end{equation}
Substituting into \eqref{e3.9}  yields (because $y(0)=0)$
\begin{equation}\label{e3.10b}
_{\calv'}\left<\calb y,y\right>_\calv\ge \frac12\,\E|e^{W(T)}y(T)|^2_H.\end{equation}Hence, $\calb$ is monotone.
\end{proof}

  As regards the  maximality of $\calb$ in $\calv\times\calv'$, we have


\begin{lemma}\label{l8.2} Let $\calb:\calv\to\calv'$ be the operator defined by \eqref{e3.4}.
Then, $\calb$ is maximal monotone in $\calv\times\calv'$.
\end{lemma}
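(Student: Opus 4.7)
Since the monotonicity of $\calb$ has already been established in Lemma 3.1, the task is to verify maximality. My plan is to establish the Minty-Browder range condition: for every $g\in\calv'$, the equation
$$\calb y+Fy=g,\quad y\in D(\calb),$$
admits a solution, where $F:\calv\to\calv'$ is the weighted normalized duality map
$$(Fy)(t):=e^{-W(t)}J_V(e^{W(t)}y(t)),$$
with $J_V:V\to V'$ the normalized duality map of $V$ (so that $_{V'}\!\langle J_Vu,u\rangle_V=|u|_V^p$). A direct computation using \eqref{e3.2} shows $_{\calv'}\!\langle Fy,y\rangle_\calv=|y|_\calv^p$ and $|Fy|_{\calv'}=|y|_\calv^{p-1}$, so $F$ is single-valued, monotone, demicontinuous, and coercive from $\calv$ into $\calv'$. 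Combined with the monotonicity of $\calb$ and density of $D(\calb)$ in $\calv$, this range condition yields maximal monotonicity via the Minty-Browder criterion (cf.\ \cite[Theorem 1.3, p.~39]{8}).

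The range equation is the random Cauchy problem
$$\frac{dy}{dt}+(\mu+\nu)y+e^{-W(t)}J_V(e^{W(t)}y)=g,\quad y(0)=x.$$
I propose to solve it by finite-dimensional Galerkin approximation along the basis $\{e_j\}_{j\ge1}$ from \eqref{e1.2}. Let $H_n:=\mathrm{span}\{e_1,\dots,e_n\}\subset V$, let $P_n$ denote $H$-orthogonal projection onto $H_n$, and consider the $H_n$-projected equation with initial value $P_nx$. Pathwise in $\omega$ this is a monotone ODE, so a unique solution $y_n$ exists on $[0,T]$, adapted in $\omega$. Uniform a priori estimates come from applying It\^o's formula to $|e^{W(t)}y_n(t)|_H^2$ as in \eqref{e419}: the It\^o correction absorbs the $(\mu+\nu)y_n$-drift exactly, the coercivity of $J_V$ furnishes $|y_n|_\calv^p$, and Young's inequality applied to $_{\calv'}\!\langle g,y_n\rangle_\calv$ yields uniform bounds on $|y_n|_\calv$, $|Fy_n|_{\calv'}$, and (via the equation itself) $|\frac{dy_n}{dt}|_{\calv'}$.

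By reflexivity of $\calv$ and $\calv'$, a subsequence satisfies $y_n\rightharpoonup y$ in $\calv$, $\frac{dy_n}{dt}\rightharpoonup\frac{dy}{dt}$ in $\calv'$, and $Fy_n\rightharpoonup\xi$ in $\calv'$; the linear terms pass to the limit immediately, giving $\frac{dy}{dt}+(\mu+\nu)y+\xi=g$. To identify $\xi=Fy$ I apply Minty's monotonicity trick: testing the approximate equations against $y_n$ and using the integration-by-parts identity \eqref{e3.10a} (which yields the nonnegative boundary term $\tfrac12\E|e^{W(T)}y_n(T)|_H^2-\tfrac12|P_nx|_H^2$), I pass to $\limsup$ to obtain
$$\limsup_{n}\,{}_{\calv'}\!\langle Fy_n,y_n\rangle_\calv\le\,{}_{\calv'}\!\langle\xi,y\rangle_\calv,$$
and demicontinuity of $F$ forces $\xi=Fy$. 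The initial condition $y(0)=x$ is preserved because $\{y_n\}$ is equicontinuous in $C([0,T];V')$ (from the $\calv'$-bound on $\frac{dy_n}{dt}$) and $y_n(0)=P_nx\to x$ in $H$.

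The main obstacle I anticipate is the $\limsup$-step of the Minty argument, which requires weak lower semicontinuity of $\E|e^{W(T)}y_n(T)|_H^2$ under weak $\calv$-convergence of $y_n$. This will necessitate upgrading the weak convergence to pointwise-in-$t$ weak convergence of $y_n(t)$ in $V'$ (via Aubin-Lions-type compactness from the uniform $\calv'$-bound on $\frac{dy_n}{dt}$) and then applying a Fatou argument, exploiting that $e^{W(T)}$ is a continuous multiplier in $H$ by Hypothesis~(iv). A secondary technical point is the pathwise justification of It\^o's formula on the Galerkin approximations, handled by Lemma~8.1 together with the finite-dimensionality of $H_n\subset V$.
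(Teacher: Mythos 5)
Your overall architecture coincides with the paper's: monotonicity is taken from Lemma \ref{l3.1}, and maximality is reduced to the range condition $R(\calb+F)=\calv'$ with $F$ the $e^{W}$-conjugated gauge-$p$ duality map (the paper's \eqref{e4.39az}, citing \cite[Theorem 2.3, p.~35]{8}). Where you genuinely diverge is in how the auxiliary equation $\frac{dy}{dt}+(\mu+\nu)y+e^{-W}J(e^{W}y)|e^{W}y|^{p-2}_V=g$, $y(0)=x$, is solved: you propose a spatial Galerkin scheme, whereas the paper replaces $G=J(\cdot)|\cdot|^{p-2}_V$ by its Yosida approximation $G_\lambda$ relative to $H$ and solves a globally Lipschitz ODE in $H$ pathwise, with no finite-dimensional projection. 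The Yosida route spares you two of your own technical chores: the basis $\{e_j\}$ of \eqref{e1.2} is only assumed to lie in $C^2(\overline\calo)\cap H$, not in $V$, so you would need a separate Galerkin basis in $V$ and an extension of the $H$-orthogonal projections to $V'$; and you must track how $P_n$ interacts with the multiplier $e^{\pm W}$, which does not preserve $H_n$. Both schemes then converge by the same Minty-type identification, so your plan is viable in principle, but note that the identification is forced by monotonicity plus hemicontinuity of $F$ (Lemma \ref{l93}), not by demicontinuity alone as you state; also $p\in(1,2)$ requires replacing $F$ by $F+I$.

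The one step that would fail as written is your treatment of the $\limsup$ inequality. You want weak convergence of $y_n(T)$ (in $L^2(\Omega;H)$, after multiplication by $e^{W(T)}$) via ``equicontinuity in $C([0,T];V')$ from the $\calv'$-bound on $\frac{dy_n}{dt}$.'' But that bound is only $\E\int_0^T|e^{W}\frac{dy_n}{dt}|^{p'}_{V'}dt\le C$; it yields no pathwise-uniform modulus of continuity, so no Arzel\`a--Ascoli or Aubin--Lions argument is available $\omega$-by-$\omega$, and the weak lower semicontinuity of $\E|e^{W(T)}y_n(T)|^2_H$ does not follow. The paper avoids the endpoint altogether by integrating the energy identity once more in time (equivalently, weighting by $T-t$, see \eqref{e8.8}--\eqref{e8.9} and Lemma \ref{l93}): the problematic term becomes $\E\int_0^T|e^{W(t)}y_n(t)|^2_H\,dt$, together with $\E\int_0^T\int_0^t(\nu|e^Wy_n|^2_H-\frac12\sum_j\mu_j^2|e^Wy_ne_j|^2_H)\,ds\,dt$, both continuous convex functionals on $L^2((0,T)\times\Omega;H)$ and hence weakly lower semicontinuous under the weak convergence $e^Wy_n\rightharpoonup e^Wy$ you already have; the matching identity for the limit is then read off by applying It\^o's formula to the limit equation. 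If you graft this device onto your Galerkin scheme, the rest of your argument goes through.
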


\begin{proof} We denote by $J:V\to V'$ the duality mapping of $V$. We recall that $|J(y)|_{V'}=|y|_V$,
$_{V'}\!\left<J(y),y\right>_V=|y|^2_V.$ Moreover, since, by assumption,  $V'$ is strictly convex,  then $J$ is single valued, monotone and demicontinuous (see \cite{8}, p. 12). We define the mapping $F:\calv\to\calv'$ as
\begin{equation}\label{e4.39az}
(Fy)(t)=e^{-W(t)}J(e^{W(t)}y(t))|e^{W(t)}y(t)|^{p-2}_V,\ \forall t\in(0,T),\ y\in\calv.\end{equation}
 Clearly, $F$ is monotone and demicontinuous from $\calv$ to $\calv'$. Furthermore, $y\to F(y)|y|^{2-p}_\calv$ is just the duality mapping of $\calv$, if $p\ge2$. If $p\in(1,2)$, we replace $F$ by $F+I$ and proceed analogously as below with $\nu+1$ replacing~$\nu$.

It suffices to show that, for each $f\in\calv'$, the equation $\calb y+F(y)=f$, where $F$ is given by \eqref{e4.39az}, has a solution $y$ (see \cite[Theorem 2.3, p.~35]{8}).

Equivalently, $y\in D(\calb)$, and
\begin{equation}\label{e8.2}
\barr{l}
\dd\frac{dy}{dt}+e^{-W} J(e^{W}y)|e^{W}y|_V^{p-2}+(\mu+\nu)y=f,\ t\in(0,T),\vsp
y(0)=x.\earr\end{equation}
We set $G(z)=J(z)|z|^{p-2}_V$, $\forall z\in V$ and
note that $G$ is monotone, demi\-con\-ti\-nuous and coercive from $V$ to $V'$. We denote by $G_H$ the restriction of $G$ to
$H$, that is, $G_H$ is the operator with graph
 $\{(u,Gu):u\in V\}\cap V\times H.$  The operator $G_H$ is maximal monotone in $H\times H$.  Let $G_\lbb$ denote its Yosida approximation, that is,
\begin{equation}\label{e8.3}
G_\lbb(z)=G(I+\lbb G_H)^{-1}(z)=\frac1\lbb\ (z-(I+\lbb G_H)^{-1}(z)),\ \forall z\in H.\end{equation}
We consider now the approximating equation of \eqref{e8.2}
\begin{equation}\label{e8.4}
\barr{l}
\dd\frac{dy_\lbb}{dt}+e^{-W}G_\lbb(e^W y_\lbb)+(\mu+\nu)y_\lbb=f,\ t\in[0,T],\vsp
y_\lbb(0)=x.\earr\end{equation}
We assume first that $f\in\calh$ and prove later on that the existence extends to all of $\calv'$. Since $G_\lbb$ is Lipschitz on $H$, it follows that \eqref{e8.3} has a unique solution $y_\lbb\in C([0,T];H)$, $\pas$ Moreover, $t\to y_\lbb(t)$ is $(\calf_t)_{t\ge0}$-adapted and by \eqref{e419}

\begin{equation}\label{e8.5}
\barr{l}
\dd\frac12\ \E|e^{W(t)}y_\lbb(t)|^2_H+\E\int^t_0\!\!\!
\left<G_\lbb(e^Wy_\lbb),e^Wy_\lbb\right>ds
+\nu\E\dd\int^t_0\!\!\!  |e^Wy_\lbb|^2_Hds\\
 =\dd\frac12\,\E\dd\int^t_0\sum^\infty_{j=1}|e^W y_\lbb e_j|^2_H\mu^2_j ds+\E\dd\int^t_0\left<fe^W,e^Wy_\lbb\right>ds+\frac12\ |x|^2_H,\\\hfill \forall t\in[0,T].\earr\!\!\!\end{equation}
Taking into account that
\begin{equation}\label{e425a}
\barr{ll}
\left<G_\lbb(z),z\right>\!\!\!&\ge(G((I+\lbb G_H)^{-1}(z)),(I+\lbb G_H)^{-1}(z))\vsp
&=|(I+\lbb G_H)^{-1}(z)|^p_V,\  \forall z\in H,\earr\end{equation}
we get, via Gronwall's lemma, for all $t\in[0,T]$,
\begin{equation}\label{e8.6}
\barr{l}
\E|e^{W(t)}y_\lbb(t)|^2_H+\E\dd\int^t_0\!\!|(I{+}\lbb G_H)^{-1}(e^Wy_\lbb(s))|^p_Vds
 \le   C(|f|^{2}_{\calh}{+}|x|^2_H).\earr\end{equation}
Hence, along a subsequence $\{\lbb\}\to0$, we have
\begin{equation}\label{e8.6a}
\barr{rcll}
e^{W} y_\lbb &\longrightarrow
& e^Wy&\mbox{weak$^*$ in }L^\infty(0,T;L^{2}(\Omega;H)),\vsp
(I+\lbb G_H)^{-1}(e^{W}y_\lbb) &\longrightarrow &z&\mbox{weakly in }L^{p}((0,T)\times \Omega;V),\vsp
G_\lbb(e^Wy_\lbb)&\longrightarrow &\eta&\mbox{weakly in }L^{p'}((0,T)\times \Omega;V'),
\earr\end{equation}
where the latter follows, since for $z\in H$, $\lbb>0$, \begin{equation}\label{e429}
|G_\lbb(z)|^{p'}_{V'}=|(I+\lbb G_H)^{-1}(z)|^p_V.\end{equation}

 \eqref{e8.3} and \eqref{e429} imply that
 $$|e^Wy_\lbb-(I+\lbb G_H)^{-1}(e^Wy_\lbb)|^{p'}_{V'}\le C_1\lbb^{p'}|(I+\lbb G_H)^{-1}(e^Wy_\lbb)|^p_V.$$
Therefore, $z=e^Wy$. Moreover, letting $\lbb\to0$ in \eqref{e8.4}, we get $\pas$
\begin{equation}\label{e8.7}
\barr{l}
\dd\frac{dy}{dt}+e^{-W}\eta+(\mu+\nu)y=f\mbox{\ \ a.e. }t\in(0,T),\vsp
y(0)=x,\earr\end{equation}
and, clearly, $y\in D(\calb)$.  It remains to show that $\eta=G(e^Wy)$. To this end, by the inequa\-lity in \eqref{e425a} and Lemma \ref{l93} below (see, also, \cite[Lemma 2.3]{8}), it suffices to show that
\begin{equation}\label{e8.8}
\limsup_{\lbb\to0}\ \E\int^T_0\int^t_0\left<G_\lbb(e^W y_\lbb),e^{W}y_\lbb  \right>ds\,dt\le
\E\dd\int^T_0\int^t_0\left<\eta,e^Wy\right>ds\,dt.\end{equation}
To this end, we note   that, by \eqref{e8.5} and \eqref{e8.6a}, \eqref{e429}, we have
$$\barr{l}
\dd\limsup_{\lbb\to0}\E\dd\int^T_0\int^t_0\left<G_\lbb(e^W y_\lbb),e^W y_\lbb\right>ds\,dt\vsp
\qquad\le\dd-\nu\E\dd\int^T_0\int^t_0|e^Wy|^2_Hds\,dt
-\dd\frac12\,\E\int^t_0|e^{W(t)}y(t)|^2_Hdt
+\dd\frac T2\,|x|^2_H\vsp
\qquad+\E\dd\int^T_0\int^t_0\left<f e^W, e^Wy\right>ds\,dt +\dd\frac12\,\E\sum^\infty_{j=1}
\int^T_0\int^t_0|e^Wye_j|^2_H\mu^2_jds\,dt,\earr$$
because both $z\to\E\int^t_0|z|^2_Hdt$  and  the function $$z\to\E\int^T_0\int^t_0\left(\nu|z|^2_H-
\frac12\sum\limits^\infty_{j=1}
|z e_j|^2_H\mu^2_j\right)ds\,dt$$ are continuous on $L^2((0,T)\times\Omega;H)$ and convex, hence   weakly lower semicontinuous in $L^2((0,T)\times\Omega;H)$.
On the other hand,   \eqref{e8.7}, Lemma \ref{l8.1} (jjj) and \eqref{e419} yield
\begin{equation}\label{e8.9}
 \barr{l}
 \dd\frac12\,\int^T_0\!\!\!
 \E|e^Wy(t)|^2_Hdt+\E\dd\int^T_0\int^t_0\!\! \left<\eta,e^Wy\right>ds\,dt\vsp
 \qquad=\dd\frac T2\ |x|^2-\nu\E\dd\int^T_0\int^t_0\!\!|e^Wy|^2_Hds\,dt\vsp
 \qquad+\dd\frac12\,\E\sum^\infty_{j=1}\int^T_0\int^t_0\!\!
 |e^Wye_j|^2_H\mu^2_j ds\,dt
 +\E\dd\int^T_0\int^t_0\!\!\left<fe^W,e^Wy\right>ds\,dt,\earr\end{equation}
and so, \eqref{e8.8} follows.

Now, let $f\in\calv'$ and choose $\{f_n\}\subset\calh$ such that $f_n\to f$ strongly in $\calv'$. If we denote by $y_n$ the corresponding solution of \eqref{e8.2}, we obtain  as in the previous case (see~\eqref{e8.5})

$$\barr{l}
\dd\frac12\,  \E|e^{W(t)}y_n(t)|^2_H+
\E\dd\int^t_0\!\!\!\left<G(e^{W(s)}y_n(s)),e^{W(s)}y_n(s)\right>ds
{+}\nu\E\dd\int^t_0\!\!|e^{W(s)}y_n(s)|^2_Hds\vsp
=\dd\frac12\ \E\!\dd\int^t_0\sum^\infty_{j=1}|e^{W(s)}y_n(s)e_j|^2_H\mu^2_jds
{+}\E\dd\int^t_0\!\!\left<f_n(s)e^{W(s)},y_n(s)e^{W(s)}\right>ds+\dd\frac12\ |x|^2_H,\earr$$
and this yields  as above
$$\E|e^{W(t)}y_n(t)|^2_H+\E\dd\int^t_0|e^{W(s)}y_n(s)|^p_Vds\le C\left(|x|^2_H+\dd\int^t_0|f_n(s)e^{W(s)}|^{p'}_{V'}ds\right)$$where $C$ is independent of $n$. Hence $\{y_n\}_n$ is bounded in $\calv\cap L^\infty((0,T)\times \Omega;H)$ and so, along a subsequence, we have
$$\barr{rcll}
e^Wy_n&\longrightarrow&e^Wy&\mbox{weak$^*$ in $L^\infty(0,T;L^2(\Omega;H))$ and}\vsp
&&&\mbox{weakly in $L^p((0,T)\times\Omega;V)$}\vsp
G(e^Wy_n)&\longrightarrow&\eta&\mbox{weakly in $L^{p'}((0,T)\otimes\Omega;V')$},\earr$$where $y$ satisfies \eqref{e8.7}. Arguing as in the proof of \eqref{e8.8}, we see that
$$\limsup_{n\to\infty}\E\dd\int^T_0\int^t_0\left<G(e^W y_n),e^W y_n\right>ds\,dt
\le \E\dd\int^T_0\int^t_0\left<\eta,e^Wy\right>ds\,dt,$$and, by Lemma \ref{l93}, this implies     that $\eta=G(e^Wy)$, and so $y$ is a solution to \eqref{e8.2}, as claimed.
This completes the proof.
\end{proof}

\bigskip\n{\it Proof of Proposition \ref{p2.1}.} Since $\cala,\calb$ are maximal monotone in $\calv\times\calv'$ and $D(\cala)=\calv$, we infer (see, e.g., \cite{8}, p.~43) that $\cala+\calb$ is maximal monotone in $\calv\times\calv'$.
Hence, the equation
\begin{equation}\label{e3.13}
{\lambda} F(y_{\lambda})+\calb y_{\lambda} +\cala y_{\lambda}=0\end{equation}
has, for each ${\lambda}>0$, a unique solution $y_{\lambda}\in D(\calb)$. (See  \cite{8}, p.~35.)

We may rewrite \eqref{e3.13} as
$$\begin{array}{l}
{\lambda} J(e^{W(t)}y_{\lambda}(t))
|e^{W(t)}y_{\lambda}(t)|^{p-2}_V+
e^{W(t)}\,
\displaystyle \frac{dy_{\lambda}(t)}{dt}
+A(t)(e^{W(t)} y_{\lambda}(t) )\vsp\hfill
+\mu e^{W(t)} y_{\lambda}(t) =0,\ t\in(0,T),\ {\omega}\in{\Omega},\vsp
y_{\lambda}(0)=x.\end{array}$$
If we apply $\left<e^{W(t)}y_\lbb (t),\cdot\right>$ to the latter and integrate over $(0,T)\times{\Omega}$, we get by \eqref{e1.3} and \eqref{e3.10a} that
$$\begin{array}{l}
(\alpha_1+{\lambda})
\E\displaystyle \int^t_0|e^{W(s)} y_{\lambda}(s) |^p_Vds
+\displaystyle \frac12\,\E|e^{W(t)}y_{\lambda}(t)|^2_H\vsp
\qquad\qquad\le\displaystyle\frac12\
 |x|^2_H+(|\alpha_2|+\nu)\displaystyle \int^t_0\E
|e^{W(s)} y_{\lambda}(s) |^2_Hds,\ \forall  t\in[0,T].\end{array}$$
By Gronwall's lemma, we see that
$$|y_{\lambda}|_\calv\le C,\ \ \forall {\lambda}>0,$$where $C$ is independent of $\lambda$ and so, along   a subsequence  again denoted by~${\lambda}$, we have
$${\lambda} F(y_{\lambda})\to 0\mbox{ strongly in }\calv', \ y_{\lambda}\to y^*\mbox{ weakly in }\calv\mbox{ as }{\lambda}\to0.$$
Hence, $\cala y_{\lambda} +\calb y_{\lambda}\to 0$ in $\calv'$ and, since $\cala+\calb$ is weakly-strongly closed (as~a~consequence of the maximal monotonicity), we have that $y^*\in D(\calb)$ and
$$\calb y^*+\cala y^*=0.$$So, $y^*$ is a solution to \eqref{e2.2} in sense of Definition \ref{d2.1}. If $y$ and $z$ are two solutions, we have by the monotonicity of $\cala$ and \eqref{e3.10b} that $$y(T)=z(T),\ \pas$$On the other hand, $y,z$ are solutions to \eqref{e2.2} on each interval $(0,t)$ and so we conclude, by the $H$-continuity of $y$ and $z$, that $\pas$, $$y(t)=z(t),\ \forall  t\in[0,T].$$This completes the proof. \hfill$\Box$

\begin{remark}\label{r3.3} {\rm The assumption:

 \begin{quote}{\it $A(t)$ single-valued and demicontinuous from $V$ to $V'$}\end{quote}

 \n can be relaxed to:

  \begin{quote}{\it For each $t\in(0,T)$, $A(t)$ is a maximal monotone $($multivalued$)$ operator from $V$ to $V'$
such that $D(A(t)){=}V,\,\forall  t\in[0,T]$ and
$A(t)(e^Wy)\cap\calv'\ne\emptyset$ for all $y\in\calv.$}\end{quote}

\n
Then, the operator $\cala$ is maximal monotone   in $\calv\times\calv'$ and, since \mbox{$D(\cala)=\calv$,} we conclude as above that $0\in R(\cala+\calb)$, as desired.
}\end{remark}

Proposition \ref{p2.1} has the following immediate extension.

\begin{proposition}\label{p2} Under the above assumptions, let $\mathcal{T}:D(\mathcal{T})\subset \mathcal{V}\to \mathcal{V}'$ be a maximal monotone operator $($possibly multivalued$)$ such that $\mathcal{B}+\mathcal{T}$ is maximal monotone on \mbox{$\mathcal{V}\times\mathcal{V}'$.} Then, for any $f\in\calv'$, there is a unique solution $y$ to the equation
\begin{equation}\label{e38a}
\mathcal{B} y+\mathcal{T} y+\mathcal{A} y=f.\end{equation}\end{proposition}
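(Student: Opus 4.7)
The plan is to mimic the argument of Proposition~\ref{p2.1} almost verbatim, treating $\mathcal{B}+\mathcal{T}$ as a single maximal monotone operator. First, recall from Lemma~\ref{l3.1} that $\mathcal{A}:\mathcal{V}\to\mathcal{V}'$ is maximal monotone with full domain $D(\mathcal{A})=\mathcal{V}$. Combined with the hypothesis that $\mathcal{B}+\mathcal{T}$ is maximal monotone on $\mathcal{V}\times\mathcal{V}'$, the classical sum theorem for maximal monotone operators (\cite{8}, p.~43) immediately yields that $\mathcal{A}+\mathcal{B}+\mathcal{T}$ is maximal monotone on $\mathcal{V}\times\mathcal{V}'$.

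Next, for each $\lambda>0$ I would apply the surjectivity theorem for maximal monotone perturbations of the duality map (\cite{8}, Theorem~2.3, p.~35) to produce a unique $y_\lambda\in D(\mathcal{B})\cap D(\mathcal{T})$ solving the regularised inclusion
\[\lambda F(y_\lambda)+\mathcal{A} y_\lambda+\mathcal{B} y_\lambda+\mathcal{T} y_\lambda\ni f,\]
where $F$ is the duality map of $\mathcal{V}$ introduced in \eqref{e4.39az} (replacing $F$ by $F+I$ if $p\in(1,2)$, exactly as in Lemma~\ref{l8.2}).

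To get the a priori bound, fix any reference pair $(y_0,\zeta_0)$ in the graph of $\mathcal{B}+\mathcal{T}$ (nonempty since the sum is maximal monotone). Pairing the regularised inclusion with $y_\lambda-y_0$ and combining the monotonicity inequality $\langle(\mathcal{B}+\mathcal{T})y_\lambda-\zeta_0,y_\lambda-y_0\rangle\ge 0$ with the $\mathcal{V}$-coercivity of $\mathcal{A}$ coming from \eqref{e1.3}, the growth bound \eqref{e1.4}, and Young's inequality to absorb cross terms of the form $|y_\lambda|_\mathcal{V}^{p-1}|y_0|_\mathcal{V}$, one expects $\sup_\lambda|y_\lambda|_\mathcal{V}<\infty$ and in particular $|\lambda F(y_\lambda)|_{\mathcal{V}'}=\lambda|y_\lambda|_\mathcal{V}^{p-1}\to 0$ strongly in $\mathcal{V}'$ as $\lambda\to 0$. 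Extracting a weakly convergent subsequence $y_\lambda\to y^*$ in $\mathcal{V}$ and invoking the weak-strong closedness of the maximal monotone operator $\mathcal{A}+\mathcal{B}+\mathcal{T}$ then gives $f\in(\mathcal{A}+\mathcal{B}+\mathcal{T})(y^*)$, as desired. Uniqueness follows from the strong monotonicity of $\mathcal{A}$ encoded in \eqref{e18a}, the lower bound \eqref{e3.10b} for $\mathcal{B}$, and the monotonicity of $\mathcal{T}$, exactly as at the end of the proof of Proposition~\ref{p2.1}.

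The main delicate point is the a priori estimate: since $\mathcal{T}$ is abstract and possibly multivalued, $\langle\mathcal{T} y_\lambda,y_\lambda\rangle$ cannot be controlled directly, and one must exploit the joint monotonicity of $\mathcal{B}+\mathcal{T}$ against a fixed graph point, whose existence is precisely what the maximal monotonicity of the sum assumed in the statement provides.
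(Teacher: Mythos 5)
Your proposal is correct and follows essentially the same route as the paper: Rockafellar's sum theorem gives maximal monotonicity of $\cala+\calb+\calt$, surjectivity of $\lambda F+\calb+\calt+\cala$ provides the regularized solutions $y_\lambda$, and one passes to the limit $\lambda\to0$ using the a priori bound and weak--strong closedness, with uniqueness from monotonicity and \eqref{e3.10b}. The only difference is that you make the a priori estimate explicit by testing against a fixed point of the graph of $\calb+\calt$, whereas the paper simply refers back to the Gronwall argument of Proposition \ref{p2.1}; this is a reasonable and arguably more self-contained way to handle the abstract, possibly multivalued $\calt$.
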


\begin{proof} Since $\calb+\mathcal{T}$ and $\cala$ are maximal monotone operators in $\calv\times\calv'$ and $\cala$ is defined on all of $\calv$, by the above mentioned Rockafellar result, $\calb+\mathcal{T}+\cala$ is maximal monotone on $\calv\times\calv'$ and so $R(\lambda F+\calb+\mathcal{T}+\cala)=\calv'$. Then, letting $\lambda\to0$ as in the above proof, we conclude that \eqref{e38a} has at least one solution $y\in D(\calb)\cap D(\mathcal{T})$. The uniqueness follows as above.~\end{proof}\smallskip

In particular, Proposition \ref{p2} applies to the finite dimensional stochastic differential equation
\begin{equation}\label{e4.26}
\begin{array}{l}
dX+F(X)dt\ni XdW,\ t\in(0,T),\vsp
X(0)=x,\end{array}\end{equation}
where $F:\rr\to\rr^d$ is a maximal monotone graph (multivalued) in \mbox{$\rr^d\times\rr^d$,} such that $D(F)=\rr^d$. Then, the operator $\mathcal{T}y=e^{-W}F(e^Wy)$ is maximal monotone in $\calh\times\calh$ (here, $H=V=V'=\rr^d)$ and $D(\calf)=\calh$. Hence $\calb+\calf$ is maximal monotone and so, for each $f\in\calh$, equation \eqref{e38a} (and, implicitly, \eqref{e4.26}) has a unique solution.
This result is, in particular, applicable to the finite dimensional stochastic differential equations \eqref{e4.26} with the monotonically nondecreasing, discontinuous function $F$ after filling the jumps in discontinuity points.

\begin{remark}\label{r4.5} {\rm Equation \eqref{e1.1} with additive noise, that is,  $dX+A(t)Xdt=dW,$  $X(0)=x$, reduces via the transformation $X=Y-W$, to the random differential equation
$$\frac{dY}{dt}+A(t)(Y+W(t))=0,\ \ Y(0)=x,$$which, under assumptions (i)--(iii), has a unique solution $Y=Y(t,\omega)$ by virtue of the standard existence theory for the Cauchy problem associated with nonlinear, monotone and demicontinuous operators $Y\to A(t)(Y+W(t))$ from $V$ to $V'$. (See \cite{8}, p. 183 and~\cite{9}.)}\end{remark}


\section{The subgradient case $A(t)=\partial\varphi(t,\cdot)$}
\setcounter{equation}{0}

Assume now that $A$  satisfies (ii), (iii) and is the subdifferential of a continuous convex function on $V$. More precisely, $A(t)=\partial\varphi(t,\cdot)$, where $\varphi(t)=\varphi(t,\omega,y)$, is measurable in $(t,\omega)$, continuous and convex in $y\in V$. That is,
$$\barr{r}
A(t)y=\{\eta\in V';\ {}_{V'}\!\!\left<\eta,y-z\right>_V\ge\varphi(t,y)-\varphi(t,z),\ \ \forall  z\in V,\ t\in(0,T),\ \pas\}.\earr$$Then, $\cala$ defined by \eqref{e3.3} is itself the subdifferential $\partial\Phi:\calv\to\calv'$ of the convex lower-semicontinuous function $\Phi:\calv\to\rr$ defined by
$$\Phi(y)=\E\int^T_0(\varphi(t,e^{W(t)}y(t))
-\nu|e^{W(t)} y(t) |^2_H)
dt,\ \forall y\in\calv.$$
Then,  taking into account that $\Phi^*(u)+\Phi(y)\ge \left<y,u\right>_{\calv'}$ for all $u\in\calv'$, \mbox{$y\in\calv$,} with equality if $u\in\partial\Phi(y)$, we may  equivalently write equation \eqref{e3.5}  as
\begin{equation}\label{e42b}
\calb y+u=0,\ \ \Phi(y)+\Phi^*(u)-{}_{\calv'}\!\!\left<u,y\right>_\calv=0,\end{equation}where $\Phi^*:\calv'\to\rr$ is the conjugate of $\Phi$, that is,
$$\Phi^*(v)=\sup\{{}_{\calv'}\!\!\left<v,u\right>_\calv-\Phi(u);\ u\in\calv\}.$$
Taking into account that
$$\Phi(z)+\Phi^*(v)-{}_{\calv'}\!\!
\left<v,z\right>_\calv\ge0,\ \forall v\in\calv',\ z\in\calv,$$it follows that the solution $y$ to equation \eqref{e3.5} (equivalently, \eqref{e2.2}) is the solution to the minimization problem
$${\rm Min}\{\Phi(y)+\Phi^*(u)-{}_{\calv'}\!\!\left<u,y\right>_\calv;\ \calb y+u=0\}.$$Equivalently,
\begin{equation}\label{e42aa}
{\rm Min}\left\{\Phi(y){+}\Phi^*({-}\calb y)+{}_{\calv'}\!
\left<\calb y,y\right>_\calv;\,  y \in  D(\calb)\right\}\!.\hspace*{-2,5mm}\end{equation}In this way, the Cauchy problem \eqref{e2.2} and, implicitly, the stochastic differential equation \eqref{e1.1} reduces to the convex minimization problem \eqref{e42aa}.

Taking into account that, under assumptions (i)--(iii), the function
$$y\to\Phi(y)+\Phi^*(-\calb y)+\frac12\,\E|e^{W(T)}y(T)|^2_H$$ is convex, lower semicontinuous and coercive on $\calv$, we infer (without invoking Proposition \ref{p2.1}) that \eqref{e42aa} has a solution which turns out to be just the solution to \eqref{e3.5}. This might be an alternative way to prove existence and uniqueness for  equation \eqref{e3.5} in this special subgradient case.

This  variational approach to \eqref{e1.1} in the subgradient case $A=\partial\varphi$ inspired by the Brezis-Ekeland principle was already developed  in \cite{13}--\cite{15} for some specific stochastic differential equations and it opens up the way to use   convex analysis methods to stochastic differential equations.

\section{Examples}
\setcounter{equation}{0}

Here, we briefly present a few classes of stochastic partial differential equations for which Theorem \ref{t2.1} is applicable. Everywhere in the following, $W$ is the Wiener process~\eqref{e1.2} satisfying \eqref{e4ay}.

\subsection{Nonlinear stochastic parabolic equations}

Consider the stochastic equation  in $\calo\subset\rr^d$
\begin{equation}\label{e4.1}
\begin{array}{l}
dX-{\rm div}(a(\nabla X))dt+\psi(X)dt=X dW
\mbox{ in }(0,T)\times\calo,\vsp
X=0\mbox{ on }(0,T)\times\partial \calo,\ \ X(0)=x\mbox{ in }\calo.\end{array}
\end{equation}Here, $a:\rr^d\to\rr^d$ is a continuous   mapping such that $a(0)=0$ and
\begin{equation}\label{e4.2}
\begin{array}{ll}
(a(r_1)-a(r_2))\cdot(r_1-r_2) \ge 0,&\forall r_1,r_2\in\rr^d,\vsp
a(r)\cdot r  \ge a_1|r|^p_d+a_2,&\forall  r\in\rr^d,\vsp
|a(r)|_d \le c_1|r|^{p-1}_d+c_2,&\forall  r\in\rr^d,
\end{array}\end{equation}where $a_1,c_1 > 0,\ p>1.$

The function $\psi:\rr\to\rr$ is continuous,  monotonically nondecreasing, $0=\psi(0)$ and
\begin{equation}\label{e4.3}
|\psi(r)|\le C(|r|^q_d+1),\ \forall  r\in\rr.\end{equation}Here, $\calo\subset\rr^d$ is a bounded open subset with smooth boundary $\partial \calo$ and $|\cdot|_d$ is the Euclidean norm of $\rr^d$.

Consider the spaces $H=L^2(\calo)$, $V=W^{1,p}_0(\calo)$, $V'=W^{-1,p'}(\calo)$ and the operator $A:V\to V'$ defined by
$$_{V'}\left<A y,\varphi\right>_V=\int_\calo(a(\nabla y)\cdot\nabla\varphi+\psi(y)\varphi)d\xi,\ \forall \varphi\in W^{1,p}_0(\calo).$$

Under assumptions \eqref{e4.2}, \eqref{e4.3}, where
\begin{equation}\label{e6.3az}
q<\frac{dp}{d-p}-1\mbox{\   if }d>p;\ \ q\in(1,\infty)\mbox{\  if }d=p,\end{equation}
and no growth condition on $\psi$ if $d\le p$, by the Sobolev-Gagliardo-Nirenberg embedding theorem (see \cite{6}, \cite{7}), it follows that $A$ satisfies assumptions (i)--(iii). As regards the Wiener process $W$, we assume here that besides \eqref{e4ay} the following condition holds
\begin{equation}\label{e6.4b}
\sum^\infty_{j=1}\mu^2_j|\nabla e_j|^2_\infty<\infty.\end{equation}

Taking into account that $\nabla (e^Wy)=e^W(y\nabla W+\nabla y)$ and by \eqref{e4ay}, \eqref{e6.4b},\break $W,\nabla W\in L^\infty (\calo)$, for all $t\ge0$, it follows by \eqref{e2.5a} that Hypothesis (iv) holds for $V=W^{1,p}_0(\calo)$ and $H=L^2(\calo)$, $Z(t)=C(|\nabla W(t)|^p_\infty+|W(t)|^p_{\infty})^{\frac1p}e^{W(t)}$, where $C$ is a positive constant. We have, therefore, by Theorem \ref{t2.1},

 \begin{corollary}\label{c} Under assumptions \eqref{e4ay}, \eqref{e4.3}, \eqref{e6.3az}, \eqref{e6.4b}, equation \eqref{e4.1} has, for\break  $x\in L^2(\calo)$, a unique  solution  $X\in L^\infty(0,T;L^2(\Omega;L^2(\calo)))\cap L^p((0,T)$ $\times\Omega;$ $W^{1,p}_0(\calo))$.   Moreover, $t\to e^{-W(t)}X(t)$ is $W^{-1,p'}(\calo)$ absolutely continuous on $[0,T)$, $\pas$\end{corollary}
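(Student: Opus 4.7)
The plan is to apply Theorem \ref{t2.1} in the functional setting $H=L^2(\calo)$, $V=W^{1,p}_0(\calo)$, $V'=W^{-1,p'}(\calo)$, with $A:V\to V'$ the operator $\langle Ay,\varphi\rangle=\int_\calo(a(\nabla y)\cdot\nabla\varphi+\psi(y)\varphi)\,d\xi$ defined just before the Corollary. The whole argument reduces to verifying Hypotheses (i)--(iv). Hypothesis (i) is the classical Gelfand triple; strict convexity of $V$ and $V'$ is a standard property of $L^p$-Sobolev norms for $1<p<\infty$, so no renorming is needed. Hypothesis (ii) is vacuous because $A$ is deterministic and autonomous.

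For Hypothesis (iii), I would verify the four required properties in turn. Monotonicity follows from the monotonicity of $a$ (first inequality in \eqref{e4.2}) and from $\psi$ being nondecreasing, after integration against $y_1-y_2$. Demicontinuity: if $y_n\to y$ in $V$, then along a subsequence $\nabla y_n\to\nabla y$ and $y_n\to y$ a.e.\ in $\calo$, and the growth estimates in \eqref{e4.2}, \eqref{e4.3} supply the majorants needed for dominated convergence when testing against a fixed $\varphi\in V$. Coercivity \eqref{e1.3} follows from the second inequality in \eqref{e4.2} together with $\psi(y)y\ge0$ (since $\psi$ is nondecreasing with $\psi(0)=0$), giving $\langle Ay,y\rangle\ge a_1|\nabla y|_{L^p}^p+a_2|\calo|$, which in turn dominates $\alpha_1|y|_V^p+\alpha_3$ via Poincar\'e's inequality on $W^{1,p}_0$.

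The principal technical point of (iii) is the growth condition \eqref{e1.4}. The divergence part yields $|\mathrm{div}(a(\nabla y))|_{V'}\le C(|y|_V^{p-1}+1)$ directly from the pointwise bound $|a(r)|_d\le c_1|r|_d^{p-1}+c_2$ and the identity $(p-1)p'=p$. The $\psi(y)$ part uses the Sobolev-Gagliardo-Nirenberg embedding: under \eqref{e6.3az} one has $V\hookrightarrow L^{q+1}(\calo)$, so $\psi(y)\in L^{(q+1)/q}(\calo)\hookrightarrow V'$ with $|\psi(y)|_{V'}\le C(|y|_{L^{q+1}}^q+1)$, and Gagliardo-Nirenberg interpolation between $V$ and $H=L^2$ absorbs the excess power of $|y|_V$ into $|y|_V^{p-1}$ plus a power of $|y|_H$; Young's inequality then puts it into the form on the right-hand side of \eqref{e1.4}.

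For Hypothesis (iv), writing $\nabla(e^{\pm W(t)}y)=e^{\pm W(t)}(\nabla y\pm y\nabla W(t))$ and taking $L^p$-norms gives
\begin{equation*}
|e^{\pm W(t)}y|_V\le C\,e^{|W(t)|_\infty}\bigl(1+|\nabla W(t)|_\infty\bigr)|y|_V,
\end{equation*}
while the $L^2$-bound is simply $e^{|W(t)|_\infty}|y|_H$. Taking $Z(t)$ equal to the larger of these two scalar factors, \eqref{e2.4a} is satisfied, and $\E[\sup_{t\le T}Z(t)^r]<\infty$ for every $r\in[1,\infty)$ by Fernique's theorem \eqref{e2.5a} applied to both $W$ and to the Gaussian field $\nabla W$, the latter being $L^\infty(\calo)$-valued thanks to \eqref{e6.4b}. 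Symmetry of the multiplier on $L^2$ is immediate because multiplication by the real scalar function $e^{\pm W(t)}$ is self-adjoint, and $t\mapsto e^{\pm W(t)}\in H$ is continuous by sample-path continuity of $W$ together with dominated convergence, which establishes \eqref{e25prim}. Once (i)--(iv) are in hand, Theorem \ref{t2.1} delivers existence, uniqueness and the $W^{-1,p'}$-absolute continuity of $t\mapsto e^{-W(t)}X(t)$, completing the Corollary. The step I expect to be most delicate is the interpolation bridging \eqref{e4.3} and the precise shape of \eqref{e1.4}: it is exactly there that the strict inequality in \eqref{e6.3az} is essential.
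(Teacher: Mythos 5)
Your route is exactly the paper's: the published proof of Corollary \ref{c} consists of nothing more than the assertion that, in the triple $H=L^2(\calo)$, $V=W^{1,p}_0(\calo)$, $V'=W^{-1,p'}(\calo)$, Hypotheses (i)--(iii) follow from \eqref{e4.2}, \eqref{e4.3}, \eqref{e6.3az} via the Sobolev--Gagliardo--Nirenberg embedding and (iv) holds with $Z(t)=C(|\nabla W(t)|^p_\infty+|W(t)|^p_\infty)^{1/p}e^{|W(t)|_\infty}$, after which Theorem \ref{t2.1} is invoked; your verifications of (i), (ii), (iv), of monotonicity, demicontinuity and coercivity, and of the divergence part of \eqref{e1.4} all match this. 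The one step that does not close as you describe it --- and which the paper glosses over in the same one-line assertion --- is the claim that Gagliardo--Nirenberg interpolation plus Young's inequality forces the $\psi$-term into the precise shape of \eqref{e1.4}. Since \eqref{e1.4} permits only a \emph{linear} term $\gamma_3|u|_H$, the splitting $|y|_V^{\theta q}|y|_H^{(1-\theta)q}\le \vp|y|_V^{p-1}+C_\vp|y|_H^{s}$ with $s\le1$ requires $\frac{\theta q}{p-1}+(1-\theta)q\le1$, where $\frac1{q+1}=\theta\left(\frac1p-\frac1d\right)+\frac{1-\theta}{2}$; this fails for, e.g., $d=3$, $p=2$, $q=4$, which is allowed by \eqref{e6.3az} and gives $\theta=9/10$, hence $\frac{\theta q}{p-1}+(1-\theta)q=4$. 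For such $q$ one only obtains $|\psi(y)|_{V'}\le C(1+|y|_V^{q})$ with $q>p-1$, which is not of the form \eqref{e1.4}. So either $q$ must be restricted further than \eqref{e6.3az} (e.g.\ to $q\le p-1$, or to the range where the above Young condition holds), or \eqref{e1.4} must be weakened and the places where it is used (the bound on $|\cala y_n|_{\calv'}$ in Lemma \ref{l3.1} and the integrability claim after Definition \ref{d2.1}) revisited; the strictness of the inequality in \eqref{e6.3az}, which you single out as the saving feature, only guarantees $V\hookrightarrow L^{q+1}$ and does not by itself repair this.
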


\noindent Of course, the result remains true for the progressively measurable  pro\-ces\-ses \mbox{$a=a(t,r,{\omega}),$} $\psi=\psi(t,r,{\omega})$, where $a(t,\cdot),\psi(t,\cdot)$ satisfy \eqref{e4.2}, \eqref{e4.3} and the functions $t\to a(t,r)$, $t\to\psi(t,r)$ are of class~$L^{\infty}$ for each $r\in\rr$.
\begin{remark}\label{r4.2z} {\rm The case $p=1$,
which was studied in \cite{5},
is not covered, however, by the present result.
In fact, in this case, the space $V=W^{1,1}_0(\calo)$ is not reflexive and the solution $X$ to \eqref{e4.1} exists and is unique in a weak variational sense   in the space of functions with  bounded variation on $\calo$.}\end{remark}

\begin{remark}\label{r4.2} {\rm Equation \eqref{e4.1} with nonlinear boundary value conditions of the form
$$a(\nabla y)\cdot\!\!\vec{\ n} +\gamma(y)=0\mbox{\ \ on }(0,T)\times\partial\calo,$$
where $\!\!\vec{\ n}$ is the normal to $\partial\calo$, and $\gamma:\rr\to\rr$ is a monotonically increasing continuous function satisfying a growth condition \eqref{e4.3}, can be completely similarly treated in the variational setting $H=L^2(\calo)$, $V=W^{1,p}(\calo).$}\end{remark}

\begin{remark}\label{r6.4z} {\rm By Remark \ref{r3.3}, Corollary \ref{c} remains true for multivalued maximal monotone graphs $\psi$ which satisfy the growth condition \eqref{e4.3}, \eqref{e6.3az}. This more general case corresponds to equations of the form \eqref{e4.1} with va\-riable structure, that is, with discontinuous $\psi$.}\end{remark}

\medskip\n{\bf The H\"older continuity of $e^{-W}X$.}  Taking into account that $y=e^{-W}X$ is the solution to the random parabolic equation
\begin{equation}\label{e6.3a}
\barr{l}
y_t-e^{-W}{\rm div}(a(\nabla(e^Wy))){+}\mu y+e^{-W}\psi(e^Wy)=0\mbox{ in }Q_T=(0,T)\times\calo,\vsp
y=0\mbox{ on }(0,T)\times\pp\calo,\ \
y(0,\xi)=x(\xi),\ \xi\in\calo,\earr\end{equation}
one can obtain from the regularity theory for parabolic quasi-linear equations with principal part in divergence form (see \cite{Lady}) for $x\in L^\infty(\calo)$, the H\"older regularity  for solutions $y$ to \eqref{e6.3a} and hence for solutions of \eqref{e4.1}. The result we obtain here  is new for stochastic parabolic equations and  illustrates  the advantages of the method.

 In the sequel,  we assume that the above conditions on $a$ and $\psi$ are sa\-tis\-fied with  $p=2$ and, according to \eqref{e6.3az}, $q$ is taken in such a way that   $1<q<\frac{d+2}{d-2}$ if $d>2,$ $q\in (1,\infty)$ if $d=2$.

We rewrite \eqref{e6.3a} as
\begin{equation}\label{e6.3aa}
\barr{l}
y_t-{\rm div}\ \wt a(t,\xi,y,\nabla y)+\wt a_0(t,\xi,y,\nabla y)=0\mbox{ in }{\mathcal{Q}_T},\vsp
y=0\mbox{ on }(0,T)\times\pp\calo,\vsp
y(0,\xi)=x(\xi),\ \xi\in\calo,\earr\end{equation}where
$$\barr{lcl}
\wt a(t,\xi,y,\eta)&=&e^{-W(t,\xi)}a((\nabla W(t,\xi)y+\eta)e^{W(t,\xi)})\vsp
\wt a_0(t,\xi,\eta)&=&-\,e^{-W(t,\xi)}\nabla W(t,\xi)\cdot a((\nabla W(t,\xi)y+\eta)e^{W(t,\xi)})\vsp
&&+\,
\mu(\xi)y+e^{-W(t,\xi)}\psi(e^{W(t,\xi)}y).
\earr$$
Taking into account that, by \eqref{e2.5a}
$$\barr{lcll}
e^{-|W(t,\xi)|}&\ge&\gamma(\omega)>0,
&\forall(t,\xi)\in[0,T]\times\overline\calo,\ \omega\in\Omega,\vsp
e^{|W(t,\xi)|}&\le&\wt\gamma(\omega)<\infty,
&\forall(t,\xi)\in[0,T]\times\overline\calo,\earr$$
where $\gamma,\wt\gamma\in\bigcap\limits_{0<q<\infty} L^q(\Omega),$  we see that, for some $\g_1=\g_i(\omega)\in\rr,$ $i=1,...,5,$ $\g_1(\omega)>0$,  we have
$$\barr{rcl}
\wt a(t,\xi,y,\eta)\cdot\eta&\ge&\g_1|\eta|^2_d-\g_2|y|^2,\vsp
|(\wt a+\wt a_0)(t,\xi,y,\eta)|_d&\le&\g_3|\eta|_d+\g_4|y|^q+\g_5,
\earr$$for all $(t,\xi,y,\eta)\in[0,T]\times\overline\calo\times\rr\times\rr^d.$

Since by Corollary \ref{c} the solution $y$ to \eqref{e6.3aa} is in $C([0,T];L^2(\calo))\cap L^2(0,T;H^1_0(\calo)),$ it follows by \cite[Theorem 2.1, p.~425]{Lady} that,
for $x\in L^\infty(\calo)$, the solution $y$ is bounded,~i.e.,
$$|y(t,\xi)|\le M,\ \forall(t,\xi)\in(0,T)\times\calo.$$Hence, without loss of generality, we may replace in \eqref{e6.3aa} the functions $\wt a$~and~$\wt a_0$~by $\wt a^*$ and $\wt a_{(0)}$, defined below,
$$\wt a^*(t,\xi,y,\eta)=\left\{\barr{ll}
\wt a (t,\xi,y,\eta)&\ \ \ \mbox{ if }|y|\le M,\vsp
\wt a \left(t,\xi,M\,\frac y{|y|}\,,\eta\right)&\ \ \ \mbox{ if }|y|>M,\earr\right.$$
$$\wt a_{(0)}(t,\xi,y,\eta)=\left\{\barr{ll}
\wt a_{0}(t,\xi,y,\eta)&\mbox{ if }|y|\le M,\vsp
\wt a_{0}\left(t,\xi,M\,\frac y{|y|}\,,\eta\right)&\mbox{ if }|y|>M,\earr\right.$$and so,  we may assume that $\wt a$ and $\wt a_0$ satisfy Hypotheses (1.1)--(1.3) of Theorem
1.1 in \cite{Lady}, p. 419. Then, according to this theorem, for $x\in L^\infty(\calo)$, the solution $y$   belongs to the space $H^{\alpha,\frac\alpha2}(Q_T)$ of H\"older continuous functions of order $\alpha$ in $\xi\in\calo$ and order $\frac\alpha2$ in $t$ on $Q_T$ for some $\a>0$.

We have, therefore,

\begin{proposition}\label{p6.2} Let $x\in L^\infty(\calo)$ and let $a,\psi$ satisfy \eqref{e4.2}, \eqref{e4.3} with   \mbox{$p=q=2.$}  Then, the solution $X$ to equation \eqref{e4.1} satisfies
\begin{equation}\label{e6.3aaaa}e^{-W}X\in H^{\alpha,\frac\alpha2}(Q_T),\ \pas\end{equation}for some $\alpha=\alpha(\omega)\in(0,1).$

In particular, for each $t\in(0,T) $ and $\pas$, $\xi\to X(t,\xi)$ is H\"older continuous on $\calo$.\end{proposition}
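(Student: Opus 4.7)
The plan is to exploit the random PDE formulation already laid out before the statement and invoke Ladyzhenskaya–Solonnikov–Uralceva's classical Hölder regularity for quasilinear parabolic equations in divergence form, applied pathwise (i.e.\ for a.e.\ $\omega \in \Omega$).

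First I would fix a null set outside which the bounds on $W$ and $\nabla W$ from Fernique's theorem \eqref{e2.5a} hold uniformly on $[0,T]\times\overline{\calo}$, so that $\gamma(\omega), \wt\gamma(\omega)$ and the constants $\gamma_i(\omega)$ appearing in the structure inequalities
\[
\wt a(t,\xi,y,\eta)\cdot\eta \ge \gamma_1|\eta|_d^2 - \gamma_2|y|^2, \qquad |(\wt a+\wt a_0)(t,\xi,y,\eta)|_d \le \gamma_3|\eta|_d + \gamma_4|y|^q+\gamma_5
\]
are finite and strictly positive. By Corollary~\ref{c} applied with $p=2$, the function $y:=e^{-W}X$ lies in $C([0,T];L^2(\calo)) \cap L^2(0,T;H^1_0(\calo))$ $\pas$, and is a weak solution of \eqref{e6.3aa}. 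Combined with $x\in L^\infty(\calo)$ and the above coercivity/growth bounds, this puts us exactly in the hypotheses of \cite[Theorem~2.1, p.~425]{Lady}, yielding the pathwise $L^\infty$-bound $|y(t,\xi)|\le M(\omega)$ on $Q_T$.

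Next I would use this $L^\infty$-bound to replace $\wt a$ and $\wt a_0$ by their truncations $\wt a^*$ and $\wt a_{(0)}$ defined in the text: on the range $\{|y|\le M(\omega)\}$ where the solution actually lives, the truncated coefficients coincide with the originals, so $y$ remains a weak solution of the truncated equation; but now $\wt a^*$ and $\wt a_{(0)}$ have growth in $y$ controlled on all of $\rr$, which is what one needs to verify the structural hypotheses (1.1)--(1.3) of \cite[Theorem~1.1, p.~419]{Lady}. At this point, monotonicity and demicontinuity of $a$, together with the pathwise smoothness of $e^{\pm W}$, $\nabla W$ and $\mu$ in $(t,\xi)$, take care of the measurability/continuity requirements on the coefficients, and the growth exponent $q<\frac{d+2}{d-2}$ matches the subcritical Sobolev range under which the Ladyzhenskaya theorem applies with $p=2$.

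Invoking \cite[Theorem~1.1, p.~419]{Lady} pathwise then gives some $\alpha=\alpha(\omega)\in(0,1)$ such that $y\in H^{\alpha,\alpha/2}(Q_T)$, $\pas$, which is exactly \eqref{e6.3aaaa}. The final claim on the spatial Hölder regularity of $X(t,\cdot)$ follows because $e^{W(t,\cdot)}$ is itself in $C^2(\overline{\calo})$ $\pas$ by the assumption $e_j\in C^2(\overline{\calo})$ together with \eqref{e6.4b}, so $X=e^W y$ inherits spatial Hölder continuity from $y$.

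The main obstacle, and the only genuinely nontrivial point, is the verification that the truncated coefficients $\wt a^*$, $\wt a_{(0)}$ satisfy hypotheses (1.1)--(1.3) of \cite[Theorem~1.1]{Lady} in a form that is uniform on $Q_T$ for fixed $\omega$; this requires carefully unpacking the dependence on $W$ and $\nabla W$ and using \eqref{e2.5a} together with \eqref{e6.4b} to ensure all the relevant $\omega$-dependent constants are finite. Everything else is a pathwise application of deterministic quasilinear parabolic regularity to \eqref{e6.3a}.
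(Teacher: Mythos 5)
Your proposal is correct and follows essentially the same route as the paper: pass to the random quasilinear equation \eqref{e6.3aa} for $y=e^{-W}X$, use Fernique's estimate \eqref{e2.5a} to get pathwise structure constants $\gamma_i(\omega)$, obtain the $L^\infty$-bound from \cite[Theorem~2.1, p.~425]{Lady} using $y\in C([0,T];L^2(\calo))\cap L^2(0,T;H^1_0(\calo))$ from Corollary~\ref{c}, truncate the coefficients to $\wt a^*$, $\wt a_{(0)}$, and then apply \cite[Theorem~1.1, p.~419]{Lady} pathwise to get $y\in H^{\alpha,\frac\alpha2}(Q_T)$. The only addition is your explicit justification of the final spatial Hölder claim for $X=e^Wy$ via the smoothness of $e^{W}$, which the paper leaves implicit.
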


\begin{remark}\label{r6.4}\rm By applying Theorem 4.1 in \cite[p.~444]{Lady},   one can obtain for the solution $y$ to \eqref{e6.3a}, and  implicitly  for \eqref{e4.1}, $L^\infty$-estimates for the gradient $\nabla y$. We omit the details.\end{remark}

\begin{remark}\label{r67} {\rm Assume further that $a_i,\psi,\frac{\pp a_i}{\pp r_j}\in C(\overline{\mathcal{Q}}_T),$ for $i,j=1,...,d,$ and that\break $x\in H^{2+\g}(\overline\calo)$, $x=0$ on $\pp\calo$ which is of class $H^{2+\g}$. Then, by Theorem 6.1 in \cite{Lady}, p.453, equation \eqref{e6.3aa} has a unique solution \mbox{$y\in H^{2+\g,1+\frac\g2}([0,T]{\times}\overline\calo)$,} where $\g$ is the H\"older exponent of $W(t)$. Moreover, in this case, the solution $y$ is continuous with respect to the initial data $x$ from $H^{2+\g}(\overline\calo)$ to $H^{2+\g,1+\frac \g2}(\overline{\mathcal{Q}}_T)$.  This implies, in particular,  that the solution $X$ to \eqref{e6.3a} is pathwise continuous with respect to $x\in H^{2+\g}(\overline\calo)$.}\end{remark}


\subsection{The porous media equation}

We first note that conditions (H1)--(H4) in \cite{3} (see p. 56) imply our conditions (i)--(iii) on $A$. Therefore,  all examples of stochastic partial differential equations from \cite{3} (see Section 4.1 therein) are covered by our results provided the noise is linear multiplicative. Nevertheless, in this subsection we present an example from \cite{3}, namely, the stochastic porous media equation, in more detail.

Consider here   the stochastic equation
\begin{equation}\label{e4.4}
\begin{array}{ll}
dX-\Delta \psi(t,\xi,X)dt=XdW&\mbox{in }(0,T)\times\calo,\vsp
X(0,\xi)=x(\xi)&\mbox{in }\calo,\vsp
\psi(t,\xi,X(t,\xi))=0&\mbox{on }(0,T)\times\times\partial \calo,\end{array}\end{equation}where $\calo$ is a bounded domain in $\rr^d$, $\psi:[0,T]\times\overline\calo\times\rr\to\rr$ is continuous, monotonically increasing in $r$, and there exist $a\in(0,\infty)$, $c\in[0,\infty)$, such that
\begin{equation}\label{e4.5}
\barr{rcll}
r\psi(t,\xi,r)&\ge&a|r|^p-c,&\forall r\in\rr,\ (t,\xi,r)\in[0,T]\times\overline\calo,\vsp
|\psi(t,\xi,r)|&\le&c(1+|r|^{p-1}),&\forall r\in\rr,\ (t,\xi,r)\in[0,T]\times\overline\calo,\earr\end{equation}where $p\in\left[\frac{2d}{d+2}\,,\infty\right)$ if $d\ge3$, and $p\in(1,\infty)$, for $d=1,2.$

By the Sobolev-Gagliardo-Nirenberg embedding theorem, we have $L^p\subset H^{-1}(\calo)$. To~write \eqref{e4.4} in the form \eqref{e1.1}, we change in this case the pivot space $H$. Namely, we take $V=L^p(\calo)$, $H=H^{-1}(\calo)$,   and $V'$ is the dual of $V$ with the pivot space $H^{-1}(\calo)$. We have, therefore, $V\subset H\subset V'$ and
$$V'=\{\theta\in\cald'(\calo);\ \theta=-\Delta v,\ v\in L^{p'}(\calo)\},$$where $\Delta$ is taken in sense of distributions.  The duality $_{V'}\left<\cdot,\cdot\right>_V$ is defined as
$$_{V'}\left<\theta,u\right>_V=\int_\calo\widetilde \theta ud\xi,\ \ \widetilde \theta=(-\Delta)^{-1}\theta.$$ $\Delta$ is the Laplace operator with homogeneous Dirichlet boundary conditions, and so\break $\widetilde \theta\in L^p(\calo)$.

The operator $A(t):V\to V'$ is defined by
$$_{V'}\left<A(t)y,v\right>_V=\int_\calo\psi(t,\xi,y)vd\xi,\ \forall  y,v\in V,\ t\in[0,T].$$
By \eqref{e4.5}, we infer that $A(t)$ satisfies \eqref{e1.3}, \eqref{e1.4}, that is,
$$\begin{array}{rcll}
_{V'}\left<A(t)y,y\right>_V&\ge&\alpha_1|y|^p_V+\alpha_2,&\forall  y\in V,\vsp
|A(t)y|_{V'}&\le&\g_1|y|^{p-1}_V+\g_2,&\forall  y\in V.\end{array}$$
(See \cite[pp.71--72]{3} for details.) It is also readily seen that $A(t):V\to V'$ is demi\-continuous.

As regards Hypothesis (iv), it is easily seen by \eqref{e4ay} that $e^{\pm W(t)}$ is~a~multiplier in $L^p(\calo)$ and $H^{-1}(\calo)$, and that \eqref{e2.4a} holds for $V=L^p(\calo)$,  \mbox{$H=H^{-1}(\calo)$} and $Z(t)=C\exp|W(t)|_\infty$ for some $C>0$.

Then, applying Theorem \ref{t2.1}, we find that:

\begin{corollary}\label{c7.3}
 For each $x\in H^{-1}(\calo)$  there is a unique solution $X\in L^{\infty}(0,T;$ $L^2({\Omega},H^{-1})$ which satisfies $X\in L^{p}((0,T)\times{\Omega};L^p(\calo))$.

 Moreover, $t\to e^{-W(t)}X(t)$ is $V'$-absolutely continuous on $[0,T]
$, $\pas$~and
\begin{equation}\label{e610}
\E\int^T_0\left|e^{W(t)}\frac d{dt}(e^{-W(t)}X(t))\right|^{p'}_{V'}dt<{\infty}.
\end{equation}
\end{corollary}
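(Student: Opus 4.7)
} The plan is simply to verify Hypotheses (i)--(iv) for the choice of spaces and operator described in the paragraphs preceding the statement, and then invoke Theorem~\ref{t2.1}. All three conclusions are immediate consequences: the integrability $X \in L^\infty(0,T;L^2(\Omega;H^{-1}))$ is \eqref{e1.3a}, the bound $X \in L^p((0,T)\times\Omega;L^p(\calo))$ is \eqref{e1.5}, and the absolute continuity of $t\mapsto e^{-W(t)}X(t)$ together with \eqref{e610} is exactly \eqref{e2.1}.

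First I would fix the variational triple $V = L^p(\calo)$, $H = H^{-1}(\calo)$, $V' = (-\Delta)L^{p'}(\calo)$, with duality ${}_{V'}\!\left<\theta,u\right>_V = \int_\calo \widetilde\theta\,u\,d\xi$, where $\widetilde\theta = (-\Delta)^{-1}\theta$, and verify Hypothesis~(i) via the Sobolev--Gagliardo--Nirenberg embedding $L^p(\calo)\hookrightarrow H^{-1}(\calo)$: this holds continuously and densely because $p\ge\frac{2d}{d+2}$ when $d\ge 3$ (dually, $H^1_0(\calo)\hookrightarrow L^{p'}(\calo)$), and trivially when $d\le 2$. Strict convexity of the norms on $V$ and $V'$ is arranged via Asplund's theorem.

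Next, I would define $A(t)$ by ${}_{V'}\!\left<A(t)y,v\right>_V = \int_\calo \psi(t,\xi,y)v\,d\xi$. Hypothesis~(ii) is inherited from the joint measurability assumed on $\psi$. For Hypothesis~(iii), monotonicity is the identity
\[
{}_{V'}\!\left<A(t)y-A(t)z,y-z\right>_V = \int_\calo(\psi(t,\xi,y)-\psi(t,\xi,z))(y-z)\,d\xi\ge 0,
\]
using that $\psi(t,\xi,\cdot)$ is nondecreasing; demicontinuity from $V$ to $V'$ follows from continuity of $\psi$ in $r$ together with the growth bound in \eqref{e4.5} and dominated convergence; and the coercivity and growth inequalities \eqref{e1.3}--\eqref{e1.4} are read off directly from \eqref{e4.5}, with $\delta=0$.

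Hypothesis~(iv) is the main technical point. The $L^p$ multiplier bound is obvious: $|e^{\pm W(t)}y|_{L^p}\le |e^{\pm W(t)}|_\infty|y|_{L^p}$. For the $H^{-1}$ bound I would argue by duality. Since $e_j\in C^2(\overline\calo)$, the function $W(t,\cdot)$ is $C^2(\overline\calo)$, $\pas$, and the chain rule gives $\nabla(e^{W(t)}\varphi) = e^{W(t)}(\varphi\,\nabla W(t)+\nabla\varphi)$ for $\varphi\in H^1_0(\calo)$, so $|e^{\pm W(t)}\varphi|_{H^1_0}\le C(1+|\nabla W(t)|_\infty)|e^{\pm W(t)}|_\infty|\varphi|_{H^1_0}$; passing to adjoints yields the corresponding multiplier bound on $H^{-1}(\calo)$. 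Taking $Z(t)=C\exp|W(t)|_\infty$ (allowing an additional factor $(1+|\nabla W(t)|_\infty)$ whose $L^r$-moments are finite by Fernique's theorem \eqref{e2.5a}, using \eqref{e4ay}), the integrability condition in (iv) is met. The symmetry and $H$-continuity conditions in \eqref{e25prim} are similarly checked using the real-valuedness of $e^{\pm W(t)}$ and the pathwise regularity of $W$.

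Once Hypotheses~(i)--(iv) are in place, Theorem~\ref{t2.1} applied to this $V$, $H$, and $A$ immediately yields all assertions of Corollary~\ref{c7.3}. The only step that requires care is the verification of Hypothesis~(iv) on the nonstandard pivot space $H^{-1}(\calo)$, in particular the symmetric multiplier condition, which the duality argument above handles.
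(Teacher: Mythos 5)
Your proposal is correct and follows essentially the same route as the paper: fix the triple $V=L^p(\calo)\subset H=H^{-1}(\calo)\subset V'$, define $A(t)$ via $\psi$, check Hypotheses (i)--(iv) from \eqref{e4.5} and \eqref{e4ay}, and invoke Theorem~\ref{t2.1}. Your duality verification of the symmetric multiplier property on $H^{-1}(\calo)$ is in fact slightly more detailed than the paper's, which merely asserts it is ``easily seen'' with $Z(t)=C\exp|W(t)|_\infty$, whereas your argument correctly records the additional factor controlling $\nabla W(t)$.
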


\begin{remark}\label{r4.2a} {\rm Through \eqref{e610}, Corollary \ref{c7.3} improves the corres\-pon\-ding results in \cite{3}, if the noise is linear multiplicative. In addition, Corollary \ref{c7.3} can be generalized. In fact,   we can take $V=L_d\cap H^{-1}(\calo)$, where $L_d$ is the Orlicz space on $\calo$ corresponding to a $\Delta_2$-regular $d$-function (see \cite{6}, p.~232). Also, if $e_j$ are multipliers in $H^{-1}(\rr^d)$, we can apply Theorem \ref{t2.1} to the case of the unbounded domain $\calo=\rr^d$. Then,  by Theorem \ref{t2.1}  we recover and improve many of the results of \cite{16} and \cite{10a}, provided the noise in linear multiplicative.}\end{remark}


\subsection{The stochastic transport equation}

Consider the stochastic first order hyperbolic equation

\begin{equation}\label{e38b}
\begin{array}{l}
dX(t,\xi)-\displaystyle\sum^d_{i=1}a_i(t,\xi)
\frac{\partial X(t,\xi)}{\partial \xi_i}\,dt+b(t,\xi)X(t,\xi)dt\vsp
\hfill
+\lambda|X(t,\xi)|^{p-2}X(t,\xi)dt=X(t,\xi)dW(t,\xi)\ \mbox{ in }(0,T)\times\calo,\vspace*{3mm}\\
X(0,\xi)=x(\xi),\   \xi\in\calo,\vsp
X(t)=0\mbox{ on }\Sigma=\left\{(t,\xi)\in[0,T]\times
\partial\calo;\sum\limits^d_{i=1}a_i(t,\xi)n_i(\xi)<0\right\}, \end{array}
\end{equation}where $\calo\subset\rr^d$ is, as usual,  an open and bounded subset with smooth boun\-dary $\partial\calo$,  $n=\{n_i\}^d_{i=1}$ is the normal to $\partial\calo$ and $a_i,b:[0,T]\times\overline\calo\to\rr$, $i=1,...,d$, are continuous functions with $\nabla_\xi a_i\in C([0,T]\times\overline\calo),$ $i=1,...,d$. We assume also that $\lambda>0$,  $p\ge2$, and that $x\in H^1(\calo)\cap L^p(\calo)$, $x=0$ on~$\Sigma$.

By transformation \eqref{e1.6}, we reduce \eqref{e38b} to the random differential transport equation

\begin{equation}\label{e38bb}
\begin{array}{l}
\displaystyle\frac{\partial y}{\partial t}-\sum^d_{i=1} a_i e^{-W}\frac\partial{\partial \xi_i}\,(e^Wy)
+(b+\mu)y+\lambda e^{(p-2)W}|y|^{p-2}y=0\vspace*{-2mm}\\\hfill\mbox{ in }(0,T)\times\calo,\vsp
y(0,\xi)=x(\xi),\ \xi\in\calo,\qquad
y=0\mbox{ on }\Sigma.\end{array}\end{equation}

We consider now the spaces $\calv,\calh,\calv'$ defined in Section 4, where $V=L^p(\calo)$,\break  $H=L^2(\calo)$, $V'=L^{p'}(\calo)$ and set
$$\barr{lcl}
\displaystyle\cala y&=&\lambda e^{(p-2)W}|y+x|^{p-2}(y+x)-\dd\sum^d_{i=1}a_ie^{-W}\frac\pp{\pp\xi_i}\,(e^Wx)+(b+\mu)x,\ \ \  y\in\calv,\\
\displaystyle\calb y&=& \displaystyle\frac{dy}{dt}+(\mu+\nu)y,\ \ y\in D(\calb),\earr $$where we used the transformation $y\to y-x$. $D(\calb)$ is given by \eqref{e3.4} with $x$ replaced by $0$, and $\nu$ by  \eqref{e4ay}. Then \eqref{e38bb} can be  equivalently written as
$$\calb y+\calt_0(y)+\cala y=0,$$where $\calt_0:\calv\to\calv'$ is given by
$$\begin{array}{lcl}
\mathcal{T}_0(y)&=&
-\displaystyle\sum^d_{i=1}
a_ie^{-W}\frac\partial{\partial \xi_i}
\,(e^W y )+(b-\nu)y,\ \forall y\in D(\mathcal{T}_0),\vsp
D(\mathcal{T}_0)&=&\{y\in\calv;\ e^{-W}a\cdot\nabla(e^Wy)\in\mathcal{V}',\ y =0\mbox{ on }\Sigma\}.\end{array}$$
The trace of $y$ on $\Sigma$ is taken in a weak distributional sense (see \cite{18}).

We assume that
\begin{equation}\label{e38bbb}\frac12\,{\rm div}_\xi\,a(t,\xi)+b(t,\xi)>\nu,\ \forall(t,\xi)\in[0,T]\times\overline\calo,\end{equation}
where $a=\{a_i\}^d_{i=1}$. Then, as is  easily seen, $\mathcal{T}_0$ is monotone. It should be said  that Proposition \ref{p2} is not directly applicable here because $\mathcal{T}_0+\calb$ is not maximal monotone. However, by \cite{18} (see also \cite[p.~330]{9} ), the operator $\calb+\mathcal{T}_0$ is closable in $L^p((0,T)\times\calo)$ for fixed $\omega\in\Omega$ and its closure $L$ is maximal monotone  in $L^p((0,T)\times\calo)\times L^{p'}((0,T)\times\calo)$. This implies that, for each fixed $\omega\in\Omega$, the equation
$$Ly+e^{-W}F(e^{-W}y)=0\mbox{\ \ in }(0,T)\times\calo,$$
has a solution $y\in D(L)$ and so, arguing as in the proof of  Lemma \ref{l8.2}, it follows that the closure $\overline{\calb+\calt_0}$ of $\calb+\calt_0$ in $\calv\times\calv'$ is maximal monotone.

We also note that Hypothesis (iv) can be checked in this case as in   Example 6.2.

Then, by Proposition \ref{p2}, the equation $$\overline{\calb+\calt_0}(y)+\cala y= 0$$ has a unique solution $y\in D(\overline{\calb+\calt_0})$.

In other words, there is a sequence  $\{y_n\}\subset D(\calb+\calt_0)$ such that $y_n\to y$ in $\calv$ and
\begin{equation}\label{e6.9}
\barr{l}
(\calb+\calt_0)(y_n)+\cala y_n\to 0\ \ \mbox{ in }\calv'.\earr
\end{equation}
We call such $y$ a generalized solution to   \eqref{e38bb} and the corresponding  \mbox{$X=e^Wy$} is a generalized solution to \eqref{e38b} in the above sense.

\begin{remark}\label{r6.7} {\rm Similarly, one can treat    stochastic equations of type  \eqref{e38b} with an integral transport term $\int_{\wt\Omega} K(t,x,z,z')X(t,z;z')dz'$ (see, e.g., \cite{9}, p.~346).}\end{remark}


\section{Extension to more general multiplicative noise}
\setcounter{equation}{0}

In this section, we indicate how the
  rescaling approach developed in the pre\-vious sections extends mutatis-mutandis to stochastic equations of the~form
\begin{equation}\label{e9.1}
\barr{l}
dX+A(t)X\,dt=\dd\sum^m_{k=1}\sigma_k(X)d\beta_k(t),\ t\in[0,T],\\
X(0)=x,\earr\end{equation}where $\sigma_k:D(\sigma_k)\subset H\to H$ are linear generators of $C_0$ mutually commuting groups on $H$. In this case, \eqref{e9.1}   reduces to a random differential equation in the space $H$. (See~\cite{17}, p.~203.) Namely, via the transformation $X=U(t)y$, where $U(t)=\prod^m_{k=1}e^{\sigma_k(\beta_k(t))}$, \eqref{e1.1} reduces~to the random differential equation
\begin{equation}\label{e9.2}
\begin{array}{l}
\displaystyle\frac{dy}{dt}+U^{-1}(t)A(t)(U(t)y)+\frac12\sum^m_{k=1}U^{-1}(t)
\sigma^2_k(U(t)y)=0,\ t\in(0,T),\\
y(0)=x.\end{array}\end{equation}Here $e^{t\sigma_k}$ is the global flow on $H$ generated by $\sigma_k$.

The existence in \eqref{e9.2} follows as in the previous case, by taking $$\cala y=U^{-1}(t)A(t)(U(t)y)$$ and replacing $\calh$, $\calv$ by the spaces of $(\calf_t)_{t\ge0}-$adapted processes $y$ on $(0,T)$ such that $$\mbox{$\E\int^T_0|U(t)y(t)|^2_Hdt<\infty$ (and, respectively, $\E\int^T_0|U(t)y(t)|^p_Vdt<\infty$).}$$

We assume that $U$ satisfies Hypothesis (iv) with $U(t)$ instead of $e^{W(t)}$ and take on $\calh$ the scalar product\
$$\left<u,v\right>=\E\int^T_0\left<U(t)u,U(t)v\right>dt,$$which extends to the duality pairing
 $_{\calv'}\left<u,v\right>_\calv\mbox{ on }\calv\times\calv'.$ Moreover, the space $\calv$ is reflexive and $\calv\subset\calh\subset\calv'$. If $A(t):V\to V'$ satisfies assumptions \mbox{(i)--(iii),} then clearly  $\cala:\calv\to\calv'$ is monotone, demicontinuous and coercive, i.e., satisfies all properties of operator \eqref{e3.3}. If $\calb$ is defined as in \eqref{e3.4}, we rewrite \eqref{e8.2} as $\calb y+\cala y=0$. Moreover, Lemma \ref{l8.2} remains true in the present situation. The proof is completely similar, considering instead of \eqref{e9.2} the equation
$$\barr{l}
\dd\frac{dy}{dt}+U^{-1}(t)J(U(t)y)|U(t)y|^{p-2}_V+(\mu+\nu)y=f\vsp
y(0)=x,\earr$$for $f\in\calv'$.
Then we conclude, as in the proof of Proposition \ref{p2.1}, that the range of $\calb+\cala$ is all of $\calv'$ and, in particular, that equation \eqref{e9.2} has a unique $(\calf_t)_{t\ge0}$-adapted solution $y:[0,T]\to H$, such that
\begin{eqnarray}
\E\sup_{t\in[0,T]}|U(t)y(t)|^2_H&<&\infty\label{e9.3}\\[2mm]
\E\int^T_0\left|U(t)\ \frac{dy}{dt}\right|^{p'}_{V'}dt&<&\infty.\label{e9.4}\end{eqnarray}
Then, Theorem \ref{t2.1} remains valid in the present case with $U(t)$ instead of~$e^{W(t)}$.

Consider as an example the nonlinear diffusion equation (see \cite{15}, \cite{12a})

\begin {equation}\label{e9.5}
\barr{l}
dX-{\rm div}(a(\nabla X))dt-\dd\frac12\ b\cdot\nabla(b\cdot\nabla X)dt
=b\cdot\nabla X\,d\beta,\
  t\in[0,T],\ \xi\in\calo,\vsp
X(0,\xi)=x(\xi),\ \xi\in\calo;\ \ X=0\mbox{ on }(0,T)\times\pp\calo.\earr\end{equation}
Here, $\calo\subset\rr^d$, $d=1,2,3,$ is a bounded open domain with smooth boundary, $\beta$ is a Brownian motion, $a:\rr^d\to\rr^d$ is a monotone mapping satisfying \eqref{e4.2} and $b\in(C^1(\overline\calo))^d$ satisfies:  ${\rm div}\ b=0$, $b\cdot\nu=0$ on $\pp\calo$ $(\nu$ is the normal to $\pp\calo)$. It should be mentioned that \eqref{e9.5} is equivalent to the Stratonovich stochastic equation
$$\barr{l}
dX-{\rm div}(a(\nabla X))dt=(b\cdot\nabla X)\circ d\beta\mbox{ in }(0,T)\times\calo,\vsp
X(0)=x,\ X=0\mbox{ on }(0,T)\times\pp\calo.\earr$$
Equation \eqref{e9.5} is of the form \eqref{e9.1}, where $H=L^2(\calo)$, $V=W^{1,p}_0(\calo)$, $V'=W^{1,p'}(\calo)$,~and
$$(U(t)f)(\xi)=\exp(\beta(t)b)(f)(\xi)=f(Z(\beta(t),\xi)),\ t\ge0,\ \xi\in\calo,$$where $Z=Z(s)$ is the flow of diffeomorphisms on $\overline\calo$ generated by the Cauchy problem
$$\frac{dZ}{dt}=b(Z),\ s\ge0;\ \ Z(0)=\xi\in\overline\calo.$$(See \cite{15}, \cite{12a}.)
 Then, \eqref{e9.2} is, in this case,
\begin{equation}\label{e9.6}
\barr{l}
\dd\frac{\pp y}{\pp t}\ (t,\xi)-U^{-1}(t)({\rm div}(a(\nabla_\xi(U(t)y(t))))=0,\ t\in[0,T],\ \xi\in\calo,\vsp
y(0,\xi)=x(\xi),\ y=0\mbox{ on }(0,T)\times\pp\calo,\earr\end{equation}and, as seen above, it follows the existence of a solution $y:[0,T]\to L^2(\calo)$ satisfying \eqref{e9.3}--\eqref{e9.4}. More details will be contained in  forthcoming work.

\section{Appendix}
\setcounter{equation}{0}

\begin{lemma}\label{l8.1}\
\begin{itemize}
\item[\rm(j)] Let $y=y(t),\ t\in[0,T]$, be an $H$-valued $(\calf_t)_{t\ge0}$-adapted process with continuous sample paths, $V'$-absolutely continuous on $(0,T)$ and satisfying \eqref{e2.3}--\eqref{e2.7}. Then, $X=e^Wy$ is a strong solution to \eqref{e1.1}, which satisfies \eqref{e1.3a}--\eqref{e1.5}.
\item[\rm(jj)] Let $X=X(t),\ t\in[0,T]$, be an $H$-valued $(\calf_t)_{t\ge0}$-adapted process with con\-ti\-nuous sample paths satisfying \eqref{e1.3a}--\eqref{e1.5}. Then, $y=e^{-W}X$ sa\-tis\-fies \eqref{e2.3}--\eqref{e2.7}.
    \item[\rm(jjj)] Let $y\in D(\calb)$ $($see \eqref{e3.4}$)$. Then $t\mapsto y(t) e^{W(t)}\in H$ is continuous $\pas$ and, $\forall t\in[0,T],$

        $\barr{l}
        \dd y(t)e^{W(t)}{=}y(0)+\dd\int^t_0\!\left[ e^{W(s)} \dd\frac{dy}{ds}+\mu e^{W(s)}y(s)\right]ds+\!\dd\int^t_0 \! y(s)e^{W(s)} dW(s).\earr$
\end{itemize}
\end{lemma}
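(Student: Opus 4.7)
The unifying tool is the infinite-dimensional It\^o product formula, so I would first establish (jjj) and then derive (j) and (jj) as essentially algebraic consequences of it. The starting observation is that $t\mapsto e^{\pm W(t)}$, thought of as an $H$-valued (in fact multiplier-valued) semimartingale, satisfies
\[
 de^{\pm W(t)} \;=\; \pm\, e^{\pm W(t)}\,dW(t) \;+\; \mu\, e^{\pm W(t)}\,dt ,
\]
which follows by applying It\^o to the series expansion \eqref{e1.2} of $W$ together with the definition \eqref{e1.9} of $\mu$ and the fact from (iv) that $e^{\pm W(t)}$ is a (symmetric) multiplier in $H$.

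For (jjj), the plan is to approximate an arbitrary $y \in D(\calb)$ by a sequence of $(\calf_t)$-adapted, $V$-valued, $C^1$-in-time processes $y_n$ obtained by time-mollification (extended backward from the initial condition $y(0)=x$), so that $y_n\to y$ in $\calv$, $\tfrac{dy_n}{dt}\to\tfrac{dy}{dt}$ in $\calv'$, and $y_n\to y$ in $C([0,T];V')$ and in $C([0,T];H)$, $\pas$. For each $y_n$ the process has finite variation $\pas$, hence has no quadratic covariation with $W$, and the It\^o product rule in $H$ applied to $y_n(t)\,e^{W(t)}$ yields
\[
 y_n(t) e^{W(t)} \;=\; y_n(0) \;+\; \int_0^t\! e^{W(s)}\,\tfrac{d y_n}{ds}\,ds \;+\; \int_0^t\! \mu\, e^{W(s)} y_n(s)\,ds \;+\; \int_0^t\! y_n(s)\,e^{W(s)}\,dW(s) .
\]
I would then pass to the limit: the Bochner integrals converge by the multiplier estimate \eqref{e2.4a} together with the bounds \eqref{e2.5}, \eqref{e2.7}; the It\^o integral converges in $L^2(\Omega;H)$ via the isometry, using \eqref{e4ay} to bound the Hilbert--Schmidt norm of $\sigma(y_n e^W)$; and the left-hand side converges $\pas$ after extracting a subsequence. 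The continuity of the limiting integrated form in $t$ then gives the claimed $\pas$ continuity of $t\mapsto y(t)\,e^{W(t)}$ in $H$ together with the formula, for every $t\in[0,T]$.

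Granting (jjj), part (j) is immediate: for $y$ satisfying \eqref{e2.3}--\eqref{e2.7}, the formula in (jjj) identifies $X(t):=e^{W(t)}y(t)$; substituting $e^{W}\tfrac{dy}{dt} = -A(t)(e^W y) - \mu e^W y$ from \eqref{e2.6} collapses the drift exactly to $-A(t)X$, yielding \eqref{e1.4a}, while \eqref{e1.3a} and \eqref{e1.5} follow from \eqref{e2.3}, \eqref{e2.7} and the growth bound \eqref{e1.4}. For (jj) I would run the product rule in reverse on $e^{-W(t)}X(t)$, where now $X$ has a genuine martingale part so a nontrivial bracket appears:
\[
 d[X,e^{-W}]_t \;=\; -\sum_{j=1}^\infty \mu_j^2\, e_j^2\, e^{-W} X\,dt \;=\; -2\mu\, e^{-W} X\, dt ,
\]
by \eqref{e4ay}. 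Combining this with $de^{-W} = -e^{-W}dW + \mu e^{-W}dt$ and \eqref{e1.4a} makes the $dW$-terms cancel and the $\mu$-terms telescope, producing precisely $dy = -e^{-W}A(t)(e^W y)\,dt - \mu y\,dt$, i.e.\ \eqref{e2.6}; the integrability conditions \eqref{e2.3}, \eqref{e2.5}, \eqref{e2.7} follow from \eqref{e1.3a}, \eqref{e1.5} via (iv).

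The principal obstacle is the rigorous justification of the product formula in (jjj): since $y$ is only $V'$-absolutely continuous while $e^{W(t)}$ is $H$-valued (and operates by multiplication), no standard reference applies verbatim, so the smoothing $y_n$ and the passage to the limit in every term must be carried out carefully. The crucial inputs are the multiplier bound \eqref{e2.4a} with $Z$ having moments of all orders (used both to push convergence through $e^{W}\cdot$ in $H$ and in $V'$, and to ensure the Hilbert--Schmidt summability of $\sigma(y_n e^W)$), the symmetry \eqref{e25prim} (needed so that the It\^o term in $H$ is genuinely $\int y e^W dW$ rather than an adjoint), and the time-continuity of $e^{\pm W(t)}$ in $H$ from \eqref{e25prim}, which secures the $\pas$ continuity of the left-hand side.
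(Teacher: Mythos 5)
Your strategy is sound and your computations check out, but your route differs from the paper's in the choice of regularization, so a comparison is worthwhile. The paper proves (j) directly (asserting that (jj), (jjj) follow by the same arguments) and regularizes in \emph{space}: it fixes an orthonormal basis $\{f_j\}\subset V$ of $H$ and a mollifier $J_\vp\in L(V',H)$ with $J_\vp y\to y$ in $V'$, expands $\left<\varphi,e^{W}y_\vp\right>=\sum_j\left<e^{W}\varphi,f_j\right>\left<f_j,y_\vp\right>$, obtains the scalar SDE for $t\mapsto\left<e^{W(t)}\varphi,f_j\right>$ from the pointwise It\^o formula for $e^{W(t,\xi)}$ together with the stochastic Fubini theorem (this is the rigorous counterpart of your $de^{\pm W}=\pm e^{\pm W}\,dW+\mu e^{\pm W}\,dt$), applies the one-dimensional It\^o product rule to each summand (the factor $\left<f_j,y_\vp\right>$ being absolutely continuous), and then sums over $j$ and lets $\vp\to0$. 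You instead regularize in \emph{time} and keep the argument $H$-valued; this is equally viable --- it is in fact the device the paper itself uses in Lemma \ref{l92} --- and your ordering (prove (jjj) first, deduce (j) and (jj), with an explicit covariation computation for (jj) that the paper leaves implicit) is arguably tidier. What the paper's coordinatewise reduction buys is that one never has to formulate an operator-valued product rule: every application of It\^o is for real-valued semimartingales. In your version you must still, at the end, test $y_ne^{W}$ against $\varphi\in V$ or argue pointwise in $\xi$ to justify the $H$-valued product formula, so the saving of your formulation is smaller than it looks. Two details to make explicit: the time-mollifier must be one-sided (averaging only over the past, with $y\equiv x$ for $t\le 0$), otherwise adaptedness is lost; and the bracket identity $d[X,e^{-W}]=-2\mu e^{-W}X\,dt$ in (jj) rests on the definition \eqref{e1.9} of $\mu$ (together with the representation of $X\,dW$ through $\sigma$), not on \eqref{e4ay}.
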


\begin{proof} We shall follow an argument from \cite{5}. We first note that, by the  last item in \eqref{e25prim}, it follows that with $y$ also $t\mapsto e^{W(t)}y(t)\in H$ is continuous. We shall only   prove (j).   (jj) and (jjj) follow  by the same arguments. If $\{f_j\}$ is an orthonormal basis in $H$ such that $f_j\in V,$ $\forall  j$, we have, for each ${\varphi}\in V$, $t\in[0,T]$,

$$\left<{\varphi},e^{W(t)}y(t)\right>=\sum^{\infty}_{j=1}\left<e^{W(t)}\varphi,f_j\right>
\left<f_j,y(t)\right>$$ and so, by It\^o's formula, we have
$$e^{W(t,\xi)}=1+\int^t_0 e^{W(s,\xi)}dW(s,\xi)+\mu(\xi)\int^t_0W(s,\xi)ds,\ \forall  t\in[0,T],\ \forall \xi\in\calo.$$
This yields, via the stochastic Fubini theorem,
$$\begin{array}{lcl}
\left<e^{W(t)}{\varphi},f_j\right>
&\!\!\!=\!\!\!&\left<{\varphi},f_j\right>+
\displaystyle \sum^m_{k=1}\left<{\varphi} e_k f_j,\int^t_0 e^{W(s,\zeta)}d\beta_k(s)\right> +\!\displaystyle \int^t_0\!\!\left<\mu e^{W(s)}{\varphi},f_j\right>ds\vsp
&\!\!\!=\!\!\!&\left<{\varphi},f_j\right>+\displaystyle \sum^\infty_{k=1}\int^t_0
\left<e_ke^{W(s)}{\varphi},f_j\right>d\beta_k(s)
 +\displaystyle \int^t_0\left<\mu e^{W(s)}{\varphi},f_j\right>ds.\end{array}$$
Here, $y_{\varepsilon}=J_{\varepsilon}(y)$ and $J_\varepsilon$ is a   mollifier operator $J_{\varepsilon}\in L(V',H)$   chosen in a
such a way that $\displaystyle \lim_{{\varepsilon}\to0}J_{\varepsilon}(y)=y$ in $V'$. (Such a family of mappings $J_{\varepsilon}$ always exists.) Then, we see by \eqref{e2.6} that
\begin{equation}\label{e5.1}
\begin{array}{l}
\displaystyle \frac{dy_{\varepsilon}}{dt}
+J_{\varepsilon}(e^{-W}\eta)+J_{\varepsilon}(\mu y)=0,\ \mbox{a.e. }t\in(0,T),\vsp
y_{\varepsilon}(0)=J_{\varepsilon}(x),\end{array}
\end{equation}
that is,
$$y_\vp(t)+\int^t_0(J_\vp(e^{-W(s)}\eta(s))
+J_\vp(\mu y(s)))ds=J_\vp(x),\ \forall t\in[0,T],$$
where $\eta=A(t)(e^{W(t)}y(t)).$
Since $t\to\left<f_j,y_{\varepsilon}(t)\right>$ is absolutely continuous, we can apply the It\^o pro\-duct rule to get, by \eqref{e5.1}, that
$$\begin{array}{lcl}
\left<e^{W(t)}{\varphi},f_j\right>
\left<f_j,y_\vp(t)\right>&=&
 \displaystyle
\left<{\varphi},f_j\right>_H\left<f_j,x\right>\vsp
& -&\displaystyle \int^t_0\left<e^{W(s)}{\varphi},f_j\right>
\left<f_j,J_{\varepsilon}(e^{-W(s)}\eta(s)
+\mu y(s))\right>ds\vsp
&+&\displaystyle \sum^\infty_{_k=1}\int^t_0\left<f_j,y_{\varepsilon}(s)\right>
\left<e_ke^{W(s)}{\varphi},f_j\right>d\beta_k(s) \vsp&+&\displaystyle \int^t_0\left<f_j,y_\vp(s)\right>_H
\left<\mu e^{W(s)}{\varphi},f_j\right>_Hds,\ \forall  j\in\mathbb{N},\end{array}$$where $\eta=Ay.$
Summing up and interchanging the sum with the integrals, we obtain $\pas$, for all $t\in[0,T]$ and all ${\varphi}\in V$,
$$\begin{array}{r}
\left<{\varphi},e^{W(t)}y_{\varepsilon}(t)\right>
=\left<e^{W(s)}{\varphi},x\right>-\displaystyle \int^t_0\left<e^{W(s)}{\varphi},
J_{\varepsilon}(e^{-W(s)}\eta(s)+\mu y(s))\right>ds\\
 +\displaystyle \sum^\infty_{k=1}\int^t_0\left<{\varphi},
 e_ke^{W(s)}y_{\varepsilon}(s)\right>d\beta_k(s)
+\displaystyle \int^t_0\left<\mu e^{W(s)}\varphi(s),y_{\varepsilon}(s)\right>ds.\end{array}$$
Letting ${\varepsilon}\to0$, we see that $X=e^Wy$ satisfies \eqref{e1.4a}. It is also clear that  $X$ satisfies all the conditions in Definition \ref{d1.1}.\end{proof}


\begin{lemma}\label{l92} Let $1<p<2$ and $y\in L^p(0,T;V)\cap L^2(0,T;H)$ such that $\frac{dy}{dt}\in L^{p'}(0,T;V')+L^2(0,T;H)$. Then $y\in C([0,T];H)$ and
$$\frac12\,|y(t)|^2_H=\frac12\,|x|^2_H+\int^t_0\left<\frac{dy}{ds}\,,y\right>ds,\ \forall t\in[0,T].$$
\end{lemma}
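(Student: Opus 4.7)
\medskip

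\noindent\textbf{Plan of proof.} The plan is to mollify $y$ in time after extending it across the boundary of $[0,T]$, show that the mollifications are Cauchy in $C([0,T];H)$, and then pass to the limit in the trivial chain rule enjoyed by the smoothed objects. As a preliminary step I would fix a decomposition $\dd\frac{dy}{dt}=f_1+f_2$ with $f_1\in L^{p'}(0,T;V')$ and $f_2\in L^2(0,T;H)$ and define
\begin{equation*}
\left\langle\tfrac{dy}{ds},y\right\rangle:={}_{V'}\langle f_1,y\rangle_V+\langle f_2,y\rangle_H\in L^1(0,T).
\end{equation*}
This is independent of the splitting: if $(g_1,g_2)$ is another admissible decomposition, then $h:=f_1-g_1=g_2-f_2$ belongs to $L^{p'}(0,T;V')\cap L^2(0,T;H)$ and its $V',V$ duality pairing with $y$ coincides with its $H$ scalar product with $y$, so the two integrands agree almost everywhere.

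Next, I would extend $y$ to $(-\eta,T+\eta)$ by reflection (which preserves the additive structure of the derivative) and set $y_\vp:=y*\rho_\vp$ for a smooth time mollifier $\rho_\vp$. Then $y_\vp\in C^\infty([0,T];V\cap H)$, $\dd\frac{dy_\vp}{dt}=(f_1)_\vp+(f_2)_\vp$, and the elementary product rule yields
\begin{equation*}
\tfrac12|y_\vp(t)|_H^2-\tfrac12|y_\vp(s)|_H^2=\int_s^t\Big({}_{V'}\langle(f_1)_\vp,y_\vp\rangle_V+\langle(f_2)_\vp,y_\vp\rangle_H\Big)dr,\quad 0\le s\le t\le T.
\end{equation*}
Applied to $y_\vp-y_\delta$, together with the convergences $(f_1)_\vp\to f_1$ in $L^{p'}(V')$, $(f_2)_\vp\to f_2$ in $L^2(H)$, and $y_\vp\to y$ in $L^p(V)\cap L^2(H)$, H\"older's inequality gives
\begin{equation*}
\sup_{t\in[0,T]}|y_\vp(t)-y_\delta(t)|_H^2\le|y_\vp(s)-y_\delta(s)|_H^2+o(1)\quad\mbox{as}\ \vp,\delta\to0,
\end{equation*}
for every $s\in[0,T]$. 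Choosing $s$ in the full-measure set where $y_\vp(s)\to y(s)$ in $H$ (which exists because $y_\vp\to y$ in $L^2(0,T;H)$) makes $\{y_\vp\}$ Cauchy in $C([0,T];H)$; its limit is a continuous $H$-valued representative of $y$, so $y\in C([0,T];H)$.

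Finally I would take $s=0$ in the smooth chain rule and let $\vp\to 0$: the left-hand side converges uniformly to $\tfrac12|y(t)|_H^2$; $y_\vp(0)\to y(0)=x$ in $H$ by the continuity just established (identifying $x$ with the value of the continuous representative at $0$); and the integrand converges in $L^1(0,T)$ to $\langle\frac{dy}{dr},y\rangle$ by the same convergences used above. This gives the claimed identity. The main obstacle is the non-canonical nature of the splitting $\dd\frac{dy}{dt}=f_1+f_2$, which must be reconciled (as above) so that the pairing in the chain rule is unambiguously defined, together with the need to extract a continuous representative before the initial condition $y(0)=x$ can be meaningfully used; once these points are settled, the mollification-and-passage-to-the-limit scheme proceeds in a routine fashion.
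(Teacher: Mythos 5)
Your proposal is correct and follows essentially the same route as the paper: decompose $\frac{dy}{dt}=f_1+f_2$, mollify in time, apply the elementary chain rule to the smooth approximants (and to their differences to get a Cauchy estimate in $C([0,T];H)$), and pass to the limit using the convergences in $L^{p'}(0,T;V')$, $L^2(0,T;H)$ and $L^p(0,T;V)\cap L^2(0,T;H)$. Your added care about the splitting-independence of the pairing $\left<\frac{dy}{ds},y\right>$ and the reflection extension needed to mollify near $t=0,T$ are points the paper's proof leaves implicit, and they are handled correctly.
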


\begin{proof} We have $\left(\frac{dy}{dt}\mbox{ is taken in $\cald'(0,T)$}\right)$
$$\frac{dy}{dt}=f_1+f_2,\ f_1\in L^{p'}(0,T;V'),\ f_2\in L^2(0,T;H).$$
We consider a mollifier $\rho_\vp\in C^\infty$ and set
$$y_\vp(t)=(y*\rho_\vp)(t),\ f^1_\vp=f_1*\rho_\vp,\ f^2_\vp=f_2*\rho_\vp.$$
We have $y_\vp\in C^1([0,T];V)$ and $\frac{dy_\vp}{dt}=f^1_\vp+f^2_\vp$. This yields
$$\barr{lcl}
\dd\frac12\,|y_\vp(t)-y_{\vp'}(t)|^2_H
&=&\dd\frac12\,|x|^2_H+\int^t_0
\left<f^1_\vp(s)-f^1_{\vp'}(s),y_\vp(s)
-y_{\vp'}(s)\right>ds\vsp
&&+
\dd\int^t_0\left<f^2_\vp(s)-f^2_{\vp'}(s),
y_\vp(s)-y_{\vp'}(s)\right>ds.\earr$$
Since $$\barr{lcll}
f^1_\vp&\longrightarrow&f^1&\mbox{ in }L^{p'}(0,T;V'),\vsp
f^2_\vp&\longrightarrow&f^2&\mbox{ in }L^{2}(0,T;H),\vsp
y_\vp&\longrightarrow&y&\mbox{ in }L^{p}(0,T;V)\cap L^2(0,T;H),\earr$$
as $\vp\to0$, we get that
$$\barr{lcll}
y_\vp&\longrightarrow&y&\mbox{ in }C([0,T];H)\mbox{ as }\vp\to0,\earr$$and that
$$\dd\frac12\,|y(t)|^2_H=\dd\frac12\,|x|^2_H+\dd\int^t_0\left<f^1+f^2,y\right>ds,\ \forall t\in[0,T],$$as claimed.
\end{proof}

\begin{lemma}\label{l93} Let $G(z)=J(z)|z|^{p-2}_V$ and $\{z_n\}\subset L^p((0,T)\times\Omega;V)$ be such that $z_n\to z$ weakly in $L^p((0,T)\times\Omega;V)$ and $G(z_n)\to\eta$ weakly in $L^{p'}((0,T)\times\Omega;V')$. Assume that
\begin{equation}\label{e92}
\limsup_{n\to\infty}\E\int^T_0(T-t)\left<
G(z_n(t)),z_n(t)\right>dt\le\E\int^T_0(T-t)\left<\eta(t),z(t)\right>dt.\end{equation}Then $\eta=G(z)$ a.e. in $(0,T)\times\Omega.$
\end{lemma}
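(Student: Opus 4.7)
The plan is to run a weighted Minty-type monotonicity argument. Recall that $G(z)=J(z)|z|^{p-2}_V$ is the Gâteaux gradient of the convex $C^1$ functional $V\ni z\mapsto \tfrac1p|z|^p_V$, so it is monotone and demicontinuous from $V$ to $V'$, with the growth $|G(z)|_{V'}=|z|_V^{p-1}$. The weight $\varrho(t):=T-t\ge 0$ is bounded and nonnegative on $[0,T]$, which is the only feature of the weight I will use.

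\smallskip
\textbf{Step 1 (monotonicity against a test process).} For any $w\in L^p((0,T)\times\Omega;V)$, the pointwise monotonicity of $G$ gives $\varrho(t)\langle G(z_n)-G(w),z_n-w\rangle \ge 0$ on $(0,T)\times\Omega$. Expanding and taking expectation,
\begin{equation*}
\E\!\int_0^T\!\varrho\langle G(z_n),z_n\rangle\,dt
-\E\!\int_0^T\!\varrho\langle G(z_n),w\rangle\,dt
-\E\!\int_0^T\!\varrho\langle G(w),z_n\rangle\,dt
+\E\!\int_0^T\!\varrho\langle G(w),w\rangle\,dt\ \ge\ 0.
\end{equation*}

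\smallskip
\textbf{Step 2 (pass to the limit $n\to\infty$).} Using \eqref{e92} on the first term, the weak convergence $G(z_n)\rightharpoonup\eta$ in $L^{p'}(V')$ tested against $\varrho w\in L^p(V)$ on the second, and the weak convergence $z_n\rightharpoonup z$ in $L^p(V)$ tested against $\varrho G(w)\in L^{p'}(V')$ on the third, I obtain the weighted Minty inequality
\begin{equation*}
\E\!\int_0^T (T-t)\,\langle \eta(t)-G(w(t)),\ z(t)-w(t)\rangle\,dt\ \ge\ 0,\qquad\forall\, w\in L^p((0,T)\times\Omega;V).
\end{equation*}

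\smallskip
\textbf{Step 3 (Minty trick).} Plug in $w=z-\lambda v$ with $\lambda>0$ and arbitrary $v\in L^p((0,T)\times\Omega;V)$, divide by $\lambda$, to get
\begin{equation*}
\E\!\int_0^T (T-t)\,\langle \eta-G(z-\lambda v),\ v\rangle\,dt\ \ge\ 0.
\end{equation*}
As $\lambda\downarrow 0$, demicontinuity of $G$ gives $G(z(t,\omega)-\lambda v(t,\omega))\rightharpoonup G(z(t,\omega))$ in $V'$ pointwise, hence $\langle G(z-\lambda v),v\rangle\to\langle G(z),v\rangle$ pointwise. The integrand is dominated by $(T-t)(|z|_V+|v|_V)^{p-1}|v|_V\in L^1(dt\otimes d\mathbb{P})$, so dominated convergence applies. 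Then reverse the sign of $v$ to obtain equality:
\begin{equation*}
\E\!\int_0^T (T-t)\,\langle \eta(t)-G(z(t)),\ v(t)\rangle\,dt\ =\ 0,\qquad\forall\, v\in L^p((0,T)\times\Omega;V).
\end{equation*}
Since $(T-t)v$ ranges over a dense subset of $L^p((0,T)\times\Omega;V)$ when $v$ does (test functions vanishing near $t=T$ suffice; alternatively, the identity extends to all $v$ by taking $v\mathbf{1}_{[0,T-\varepsilon]}$), we conclude $\eta-G(z)=0$ a.e.\ on $(0,T)\times\Omega$.

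\smallskip
\textbf{Main obstacle.} The only subtle point is the $\lambda\downarrow 0$ passage, which requires the pointwise demicontinuity of $G$ together with a uniform $L^{p'}$-bound on $G(z-\lambda v)$ coming from the polynomial growth $|G(\cdot)|_{V'}=|\cdot|_V^{p-1}$; once this is noted, dominated convergence finishes it. The positivity of the weight on $[0,T)$ then upgrades the integral identity to the pointwise conclusion.
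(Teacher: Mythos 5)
Your proof is correct, but it takes a genuinely different route from the paper's. You run the classical Minty--Browder argument: pointwise monotonicity of $G$ tested against an arbitrary $w$, passage to the limit using \eqref{e92} for the quadratic term and weak convergence for the cross terms, and then the perturbation $w=z-\lambda v$ with demicontinuity of $G$ and dominated convergence (justified by the growth $|G(\cdot)|_{V'}=|\cdot|_V^{p-1}$). The paper instead exploits the potential structure $G=\nabla\Phi$ with $\Phi(z)=\frac1p|z|_V^p$: it uses the subgradient inequality $\langle G(z_n),z_n-u\rangle\ge\Phi(z_n)-\Phi(u)$ together with the weak lower semicontinuity of the convex functional $z\mapsto\E\int_0^T(T-t)\Phi(z)\,dt$ to reach the same intermediate variational inequality, and then performs the same $u=z+\lambda v$ perturbation, controlling the difference quotients of $\Phi$ by the elementary inequality $\frac1p a^p-\frac1p b^p\le\max(a^{p-1},b^{p-1})|a-b|$ rather than by demicontinuity of $G$. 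Your version is slightly more general in that it never uses that $G$ is a gradient, only that it is monotone, demicontinuous and of polynomial growth; the paper's version trades the verification of demicontinuity of $G$ for the weak lower semicontinuity of a convex integral functional. Both limit passages are sound, and your final density/localization remark correctly removes the weight $T-t$ since it is strictly positive a.e.\ on $(0,T)$.
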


\begin{proof} We set $\Phi(z)=\frac1p\,|z|^p_V$ and note that $\Phi$ is G\^ateaux differentiable and $\nabla\Phi=G$. Since $\Phi$ is convex and continuous on $V$, we have by Fatou's lemma
\begin{equation}\label{e93}
\liminf_{n\to\infty}\E\int^T_0(T-t)\Phi(z_n(t))dt\ge\E\int^T_0(T-t)\Phi(z(t))dt.\end{equation}
We also have, for $u\in L^p((0,T)\times\Omega;V)$,
$$\E\int^T_0(T-t)\left<G(z_n(t)),z_n(t)-u(t)\right>dt
\ge\E\int^T_0(T-t)(\Phi(z_n(t))-\Phi(u(t))dt.$$
Then, letting $n\to\infty$, we get by \eqref{e92}--\eqref{e93}
$$\E\int^T_0(T-t) (\Phi(z(t))-\Phi(u(t)))dt\le
\E\int^T_0(T-t)\left<\eta(t),z(t)-u(t)\right>dt.$$
Taking $u=z+\lbb v$, $v\in L^p((0,T)\times\Omega;V)$, dividing by $\lbb$ and letting $\lbb\to0$, we obtain, since $\nabla\Phi=G$,
$$\E\int^T_0(T-t)\left<G(z(t))-\eta(t),v(t)\right>dt\le0,\ \forall v\in L^p((0,T)\times\Omega;V),$$where we
used the elementary inequality
$$\frac1p\ a^p-\frac1p\,b^p\le\max(a^{p-1},b^{p-1})|a-b|,\ a,b\in[0,\infty),$$ to justify the interchange of $\lim\limits_{\lbb\to0}$  with the integrals. Hence $\eta=G$ a.e. in $(0,T)\times\Omega$, as claimed.
\end{proof}

\bigskip
\footnotesize
\noindent\textit{Acknowledgments.}
 Financial support through the SFB 701 at Bielefeld University and NSF -- Grant 0606615 is gratefully acknowledged. Viorel Barbu  was also partially supported by a grant of CNCS-UEFISCDI (Romania), project PN-II-PCE-2013.

\end{document}